\newtheorem{theorem}{Theorem}[section]
\newtheorem{corollary}[theorem]{Corollary}
\newtheorem{lemma}[theorem]{Lemma}
\newtheorem{proposition}[theorem]{Proposition}
\theoremstyle{definition}
\newtheorem{definition}[theorem]{Definition}
\newtheorem*{question}{Question}
\newcommand{\Mundo}{\mathfrak{X}^{1}(M)}
\newcommand{\N}{\mathbb{N}}
\newcommand{\R}{\mathbb{R}}
\newcommand{\Q}{\mathbb{Q}}
\newcommand{\eps}{\varepsilon}
\newcommand{\SG}{{\mathcal G}}
\newcommand{\SR}{{\mathcal R}}
\newcommand{\SU}{{\mathcal U}}
\newcommand{\SV}{{\mathcal V}}
\newcommand{\ack}{{\bf Acknowledgements. }}
\begin{document}

\title[The Specification Property for Flows]
      {The Specification Property for Flows from the Robust and Generic Viewpoint}

\author[A. Arbieto]{A. Arbieto}
\address{Instituto de Matem\'atica, Universidade Federal do Rio de Janeiro, P. O. Box 68530, 21945-970 Rio de Janeiro, Brazil.}
\email{arbieto@im.ufrj.br}
\thanks{The first author was partially supported by CNPq, FAPERJ (``Jovem Cientista do Nosso Estado'') and PRONEX/DS from Brazil.}

\author[L. Senos ]{L. Senos}
\address{Instituto de Matem\'atica, Universidade Federal do Rio de Janeiro, P. O. Box 68530, 21945-970 Rio de Janeiro, Brazil.}
\email{laurasenosl@hotmail.com}
\thanks{The second author was partially supported by CNPq, CAPES/PRODOC Grant from Brazil.}

\author[T. Sodero ]{T. Sodero}
\address{Instituto de Matem\'atica, Universidade Federal do Rio de Janeiro, P. O. Box 68530, 21945-970 Rio de Janeiro, Brazil.}
\email{tati\_sodero@im.ufrj.br}
\thanks{The third author was partially supported by CNPq.}

\subjclass{Primary: 37C10, 37D20; Secondary: 37C20}


\keywords{Specification Property; Anosov Flows}

\begin{abstract}
We  prove that if $X|_\Lambda$ has the weak specification property robustly, where $\Lambda$  is an isolated set, then $\Lambda$ is a  hyperbolic topologically mixing set and,  as a consequence, if $X$ is a vector field that has the weak specification property robustly on a closed manifold $M$, then the flow $X_t$ is a topologically mixing Anosov flow. Also we prove that there exists a residual subset $\SR \in \Mundo$ so that if $X \in \SR$ and $X$ has the weak specification property, then $X_t$ is an Anosov flow.
\end{abstract}

\maketitle

\section{Introduction}


The theory of dynamical systems is  motivated by the search of knowledge of the behavior of most of the orbits of a given dynamical system. Since the work of Smale \cite{Smale}, this question was answered in a satisfactory way for hyperbolic systems. Some of these systems are called Anosov, when the whole manifold possesses a hyperbolic structure. Thus, to know what dynamical properties lead to the presence of Anosov systems is an important issue in the theory dynamical systems.

In this spirit, we can ask the consequences of the existence of dynamical properties in a robust way. More specifically, as we say above, if they imply some differential properties of the system. There are many works on this subject, for instance in the work of Ma\~n\'e \cite{Ma}, where he proves that in dimension 2, if a diffeomorphism is transitive in a robust way then it is Anosov. Usually this results, and hence the notion of robustness, holds in the $C^1$-topology. There is an analogous result in the continuous time setting by Doering \cite{D}, where he proves that a $C^1$-robustly transitive vector field on a closed 3-manifold is Anosov. We note that, \emph{a fortiori}  this rules out the existence of singularities. In the semi-local case, there is a result by Morales-Pac{\'i}fico-Pujals \cite{MPP}, where they prove that an isolated compact invariant set of a vector field on a 3-manifold which is transitive in a robust manner is a \emph{sectional-hyperbolic} set. We refer the reader to the next section for the precise definitions. However, we remark that one of the most famous sectional-hyperbolic sets is the Lorenz attractor. This attractor is not hyperbolic due to the presence of singularities, but is still robust.

Another point of view is to look what consequences some dynamical properties have for \emph{most of} the dynamical systems. Of course, we must specify what notion of largeness we are adopting. Using the fact that the $C^1$ topology make the set of dynamical systems (diffeomorphisms or vector fields) a Baire space, we could investigate properties or consequences of the dynamics for a \emph{residual} subset of dynamical systems. By definition, a residual subset is a countable intersection of open and dense sets. The Baire property says that any residual subset is dense, thus large in a topological sense. However, by definition a finite intersection of residual subsets is also a residual subset. Thus, proving more and more generic properties (i.e. properties that holds in a residual subset) you can use them to prove new properties for generic systems. The generic theory of dynamical systems deals with this type of problem and will be exploited in this article. Of course we could investigate generic system in the $C^r$-topology, since it is also Baire. However, most of the perturbations tools are still only available in the $C^1$-topology, so we restrict our studies to this topology.

The notion of the specification property is due to Bowen in \cite{Bowen} and this has turned out to be a very important notion in the study of ergodic theory of dynamical systems on a compact metric space and on statistical mechanics. More specifically, Bowen shows in \cite{B2} that for an expansive homeomorphism satisfying specification property on a compact metric space we have unique equilibrium states. In \cite{Fr} Franco shows the analogous theorem for the case of a continuous flow. Haydn and Ruelle in \cite{HR} studied the consequences of expansiveness and specification property on statistical mechanics.

Morally, a diffeomorphism $f$ or a flow $X_t$ on a compact manifold $M$ satisfies the specification property if one can shadow distinct $n$ pieces of orbits, which are sufficiently time-spaced, by a single orbit. We say that the specification property is weak if $n = 2$. The precise definition of the weak specification property will be given in the next section. In fact, it is quite technical and seems to be very strong, but it is satisfied by many examples. Indeed, every topologically mixing compact locally maximal hyperbolic set for a smooth flow satisfies this property.

The weak specification property was investigated from the viewpoint of geometric theory of discrete dynamical systems by Sakai-Sumi-Yamamoto in \cite{Sakai}, where they characterize diffeomorphisms satisfying the weak specification property robustly as Anosov diffeomorphisms.

In this paper, we extend the results in \cite{Sakai} for vector fields. More precisely, we characterize flows satisfying the weak specification property robustly. Indeed, we prove that if a flow satisfies the weak specification property robustly then the flow is Anosov. Actually, this result follows from a semi-local result which says that if an isolated invariant compact set satisfy the weak specification property robustly then this set is a  hyperbolic set (see the next section).   We want to stress that some arguments used in the robust case follows the lines of \cite{Sakai}, specifically to show the hyperbolicity of periodic orbits. Even so, we also prove the hyperbolicity of singularities, performing a similar argument. This enable us to show that the set is sectional-hyperbolic. However, as we remark before, this is not sufficient to conclude hyperbolicity. Still, we use the weak specification property combined with Kupka-Smale's theorem \cite{Palis} and a version of Hayashi's connecting lemma \cite{H}, given in \cite{Gan-Wen}, to rule out singularities, except in the trivial case when the set reduces to a unique singularity, and with this we obtain hyperbolicity. We also show that the presence of the weak specification property for a generic vector field implies hyperbolicity, thus complementing the result in the robust case, once again we need to rule out singularities, we do that arguing as in the robust case but now using Pugh's general density theorem \cite{Pugh}.

We would like to point out that our results deals with manifolds with dimension bigger than two. In the 2-dimensional case, by a result of Peixoto \cite{Peixoto}, we know that the Morse-Smale flows form an open and dense subset of the set
of $C^1$-flows. As Morse-Smale flows can not be topologically mixing, we conclude that flows satisfying the specification property robustly may exist only on manifolds with dimension higher than two.


Following the proof of Proposition 23.20 in \cite{DGS},  any  continuous flow which   has the shadowing property and is topologically mixing, also has the weak specification property. Actually, expansiveness is an hypothesis of that proposition, but here we do not need it since we  do not require  that the shadow is a periodic orbit, this is the only place where expansiveness is used.

It was proved in \cite{KOMURO}  that a geometric Lorenz attractor has the shadowing property if and only if the first return map $f$ satisfies that $f(0) = 0$ and $f(1) = 1$. 
This has the following consequence, if there exists a topologically mixing geometrical Lorenz flow such that its first return map satisfies $f(0) = 0$ and $f(1) = 1$ then this flow has the weak specification property.

Our results says that this does not happen for generic topologically mixing geometrical Lorenz flows. However,  since the shadowing property is not implied by the weak-specification property, still could exists some non-generic Lorenz flows (or attractors) with this property. This motivates the following question:


\begin{question}
What are the geometrical Lorenz attractors that satisfy the weak specification property?
\end{question}

This paper is organized as follows in section 2 we give precise definition and enunciate the main results. In section 3, we collect some consequences of the weak specification property. In section 4, we analyze the hyperbolicity of the periodic orbits. In section 5, we show some generic properties and prove some consequences of the weak specification property in this context. In section 6, we deal with singularities. In section 7, we give the proof of the results which use robustness. In section 8, we give the proof of the generic result.   Finally, in section 9 we make some comments about our main results in other contexts.

\section{Statement of the Results}

Let $M^n$, $n \geq 3$, be a Riemannian closed manifold, i.e. compact and boundaryless, and $X$ be a vector field on $M$. We denote by $d$ the induced metric on $M$, we also define the set $B(x,\epsilon) = \{y \in M; d(x,y)< \epsilon \}$. We denote by $X_t$ the generated flow and by $X_{[a,b]}(x)$ the piece of orbit defined by the set $\{y \in M; X_t(x)=y, t \in [a,b]\}$. We say that $p$ is a periodic point, or it belongs to a periodic orbit, if there exists $T>0$ such that $X_T(p)=p$,  the period of $p$ is the first positive $T$ which satisfies this equation and  the set of periodic points will be denoted by $PO(X)$. We say that $\sigma$ is a singularity if $X(\sigma)=0$, the set of singularities is denoted by $Sing(X)$. The set of critical orbits of $X$ is $Crit(X)=PO(X)\cup Sing(X)$. As usual, we also denote by $O(p)$ the orbit of $p$. The omega limit set of $x$, denoted by $\omega(x)$, is the set of points $y \in M$ such that there exists a sequence $t_n \to \infty$ with $\lim_{t_n \to \infty}X_{t_n}(x) = y$. Similarly, the alpha limit set of $x$, denoted by $\alpha(x)$, is the set of points $y \in M$ such that there exists a sequence $t_n \to -\infty$ with $\lim_{t_n \to -\infty}X_{t_n}(x) = y$.

We say that $\sigma\in Sing(X)$ is hyperbolic if all of the eigenvalues of $DX(\sigma)$ has non-zero real part. A periodic point is hyperbolic if the eigenvalues of its Poincar\'e map do not belong to the unit circle. A vector field $X$ is said to be Kupka-Smale if any critical orbit  is hyperbolic and $W^s(\sigma_1)$ is transverse to $W^u(\sigma_2)$ where $\sigma_i$ are critical orbits of $X$. The set of the $C^1$ vector fields of $M$ is denoted by $\mathfrak{X}^1(M)$ and it is endowed with the $C^1$-topology.

Let $\Lambda$ be an invariant compact set of $M$. A \emph{specification} $S= (\tau,P)$ consists of a finite colection $\tau=\{I_1, \cdots , I_m\}$ of bounded intervals $I_i=[a_i, b_i]$ of the real line and a map $P:\bigcup_{I_i \in \tau}I_i \to \Lambda$ such that for any $t_1,t_2\in I_i$ we have
$$X_{t_2}(P(t_1))=X_{t_1}(P(t_2)).$$

$S$ is said to be $K$\emph{-spaced} if $a_{i+1} \geq b_i+K$ for all $i \in \{1,\cdots , m\}$ and the minimal such $K$ is called the spacing of $S$. If $\tau = \{I_1,I_2\}$ then $S$ is said to be a \emph{weak specification}.
We say that $S$ is $\eps$\emph{-shadowed} by $x\in\Lambda$ if $d(X_t(x), P(t)) < \eps$ for all $t\in \bigcup_{I_i \in \tau}I_i$. 

\begin{definition}
An invariant compact subset $\Lambda$ of $M$ has the  \emph{weak specification property} if for any $\eps > 0$ there exists a $K = K(\eps)\in \R$ such that any $K$-spaced weak specification $S$ is $\eps$-shadowed by a point of $\Lambda$. 
In this case the vector field $X|_{\Lambda}$ is said to have the weak specification property.
We say that the vector field  $X$ has the weak specification property if $M$ has it.
\end{definition}


Let $\Lambda$ be an invariant compact set. We say that $\Lambda$ is \emph{isolated in} $U$ if there is a (compact) neighborhood $U$, called an \emph{isolating block}, of $\Lambda$ such that $\Lambda = \Lambda_X(U)$. Where $$\Lambda_{X}(U) = \bigcap_{t \in \R} X_{t}(U).$$

\begin{definition}
We say that an isolated set $\Lambda$ has the \emph{ weak specification property robustly} if $\Lambda$ has an isolating block $U$ and there exists a $C^1$-neighborhood $\mathcal{U}$ of $X$ such that for any $Y \in \mathcal{U}$,  $Y|_{\Lambda_{Y}(U)}$ has the weak specification property. In this case the vector field $X|_{\Lambda}$ is said to have the weak specification property robustly.
The vector field $X$ has the weak specification property robustly if $M$ has it.
\end{definition}

We say that an isolated set $\Lambda$ is topologically mixing if for all open sets $U$ and $V$ of $\Lambda$ there is $N>0$ such that $$U\cap X_t(V)\neq\emptyset,\;\forall t\geq N.$$

Now, we give the well known notion of hyperbolic sets.

\begin{definition}
 Let $X$ be a vector field on a compact manifold $M$. An invariant and compact subset $\Lambda$ is called a hyperbolic set if there exist an invariant continuous splitting $T_{\Lambda}M=E^s\oplus\langle X\rangle\oplus E^u$ and constants $C>0$ and $\lambda>0$ such that for every $x\in \Lambda$ we have
\begin{enumerate}
\item $\|DX_tv\|\leq Ce^{-\lambda t}\|v\|$ , for every $v\in E^s_x-\{0\}$ and
\item $\|DX_{-t}v\|\leq Ce^{-\lambda t}\|v\|$ , for every $v\in E^u_x-\{0\}$.
\end{enumerate}
If the whole manifold $M$ is hyperbolic, we say that $X$ is an Anosov flow.
\end{definition}

Our main theorems in the robust context are the following.

\begin{theorem}\label{t.weakhyplocal}
If $\Lambda$ is an isolated set which has the weak specification property robustly then $\Lambda$ is a  topologically mixing  hyperbolic set.
\end{theorem}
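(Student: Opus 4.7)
The plan is to separately establish topological mixing and hyperbolicity of $\Lambda$, with mixing following almost immediately and hyperbolicity requiring several stages. For topological mixing, given any two nonempty relatively open sets $U,V\subset\Lambda$, I would pick $p\in U$, $q\in V$ and $\eps>0$ small enough that $B(p,\eps)\cap\Lambda\subset U$ and $B(q,\eps)\cap\Lambda\subset V$. For any $t\geq K(\eps)$ the two-interval specification with $\tau=\{\{0\},\{t\}\}$, $P(0)=p$, $P(t)=q$ is $K(\eps)$-spaced, so there exists $x\in\Lambda$ that $\eps$-shadows it; then $x\in U$ and $X_t(x)\in V$, yielding $X_t(U)\cap V\neq\emptyset$ for all $t\geq K(\eps)$.

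For hyperbolicity, my first goal is to show that every critical orbit of every $Y\in\SU$ lying in $\Lambda_Y(U)$ is hyperbolic. For periodic orbits I would adapt the Sakai--Sumi--Yamamoto strategy: if a periodic orbit $\gamma$ were non-hyperbolic, a small $C^1$-perturbation would create either a small rotation on a normal section or a non-contracting center piece, after which one can pick two nearby points whose shadowing requirements by a single orbit are incompatible, contradicting the robust weak specification. The same scheme, performed at a singularity $\sigma$ with an eigenvalue of zero real part, should rule out non-hyperbolic singularities; this is where genuinely new work beyond the discrete-time diffeomorphism case appears, because the flow direction must be handled alongside the hyperbolic directions.

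Next I would pass from hyperbolicity of critical orbits throughout $\SU$ to a dominated, and then sectional-hyperbolic, splitting on $\Lambda$ via standard star-flow machinery (Liao-type selection, Gan--Wen-type arguments). Here I use that robust hyperbolicity of critical orbits yields a dominated splitting, and that weak specification combined with topological mixing forbids periodic orbits of different stable indices from coexisting in $\Lambda$ (they would generate robust heterodimensional cycles, re-introducing non-hyperbolic periodic orbits in $\SU$), so the splitting has constant dimensions and upgrades to a sectional-hyperbolic structure.

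The final and, I expect, hardest step is to exclude singularities from $\Lambda$, since sectional hyperbolicity alone does not imply uniform hyperbolicity when singularities are present. If $\sigma\in\sing(X)\cap\Lambda$ and $\Lambda$ is not the single orbit of $\sigma$, then by topological mixing the orbit of $\sigma$ is accumulated by other orbits of $\Lambda$; applying the Kupka--Smale theorem together with the Gan--Wen version of Hayashi's connecting lemma after a small perturbation staying inside $\SU$, I would arrange that $W^u(\sigma)$ meets $W^s$ of a hyperbolic periodic orbit $\gamma\subset\Lambda_Y(U)$. An orbit $\eps$-shadowing a weak specification with one piece near $\sigma$ and the other far along $\gamma$ would then need to spend an unbounded transit time near $\sigma$, violating the uniform bound $K(\eps)$. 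This contradiction forces $\sing(X)\cap\Lambda=\emptyset$, whereupon sectional hyperbolicity reduces to uniform hyperbolicity, completing the argument.
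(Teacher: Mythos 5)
Your architecture tracks the paper's quite closely (mixing from a two-point specification; hyperbolicity of critical elements by linearizing perturbations; sectional hyperbolicity via strong homogeneity and the Gan--Wen--Zhu/Metzger--Morales theorem; the connecting lemma to deal with singularities), and the mixing argument is correct. However, you misidentify the engine that drives the perturbative steps. It is not ``incompatible shadowing requirements of two nearby points'' nor ``robust heterodimensional cycles'': the actual mechanism is that weak specification forces $W^u(O)\cap W^s(O')\neq\emptyset$ for \emph{every} pair of hyperbolic critical orbits (shadow a two-piece specification tracking $O$ backwards and $O'$ forwards and pass to a limit of the shadowing points), while after a Kupka--Smale perturbation any pair with $\dim W^s+\dim W^u\le\dim M$ must have empty intersection, since a nonempty intersection contains a whole orbit and hence cannot be transverse. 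A non-hyperbolic periodic orbit or singularity is perturbed into two hyperbolic critical orbits of different indices inside $\Lambda_Y(U)$, and this dimension count supplies the contradiction. Your heterodimensional-cycle route for constant index might be salvageable but would need Bonatti--D\'{\i}az-type machinery that you do not supply, whereas the direct argument needs only Kupka--Smale.

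The genuine gap is your final step. The claimed contradiction --- that a shadowing orbit ``would need to spend an unbounded transit time near $\sigma$, violating the uniform bound $K(\eps)$'' --- does not work: $K(\eps)$ is a \emph{lower} bound on the spacing of the specification, the shadowing orbit is only constrained at times in $I_1\cup I_2$, and nothing forces it to pass near $\sigma$ during the gap or bounds its transit time. Concretely, points of $B(\sigma,\eps)$ lying at distance comparable to $\eps$ from $\sigma$ along the local unstable manifold escape in uniformly bounded time, so a specification with one piece at $\sigma$ and the other on $\gamma$ is shadowable and merely produces a point of $W^u_{\eps}(\sigma)\cap W^s(\gamma)$ --- which is the \emph{conclusion} of the intersection theorem, not its negation. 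The correct contradiction is again the index dichotomy: once the connecting lemma yields $Y\in\SU$ whose maximal invariant set contains both $\sigma_Y$ and a hyperbolic periodic orbit, one of the two required intersections $W^u(\sigma_Y)\cap W^s(\gamma)$, $W^u(\gamma)\cap W^s(\sigma_Y)$ has total dimension at most $\dim M$ (for a singularity the stable and unstable dimensions sum to $\dim M$, for a periodic orbit to $\dim M+1$) and is killed by a Kupka--Smale perturbation. Note also that you must first \emph{create} the periodic orbit by closing a recurrent regular orbit of $\Lambda$ --- which requires showing beforehand that not every point of $\Lambda$ lies in $W^s(\sigma)\cap W^u(\sigma)$, as the paper does --- since in the singular case the dichotomy guarantees that $\Lambda$ contains no periodic orbits to begin with.
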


As a consequence of these results, we obtain the following.

\begin{corollary}\label{global}
If $X$ is a vector field which has the weak specification property robustly then it generates  a topologically mixing Anosov flow.
\end{corollary}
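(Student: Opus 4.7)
The plan is to deduce the corollary as an immediate application of Theorem \ref{t.weakhyplocal} to the case $\Lambda = M$. The first step is to verify that $M$ itself fits the framework of an isolated invariant compact set. Since $M$ is closed (compact without boundary) and trivially forward- and backward-invariant under $X_t$, one can take the isolating block to be $U = M$; then
\[
\Lambda_X(U) \;=\; \bigcap_{t\in\R} X_t(M) \;=\; M,
\]
so $M$ is isolated in $U$ in the sense of the definition given above. The same equality $\Lambda_Y(M) = M$ holds for every vector field $Y\in \mathfrak{X}^1(M)$, which matches the definition precisely.

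Next, I would unpack the assumption. By hypothesis, $X$ has the weak specification property robustly as a vector field on $M$, which by definition means $M$ itself has the weak specification property robustly. Thus there is a $C^1$-neighborhood $\mathcal{U}$ of $X$ such that for every $Y \in \mathcal{U}$, $Y|_{\Lambda_Y(M)} = Y|_M$ has the weak specification property. This is exactly the hypothesis required by Theorem \ref{t.weakhyplocal} applied to the isolated set $\Lambda = M$.

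Applying Theorem \ref{t.weakhyplocal}, I conclude that $M$ is a topologically mixing hyperbolic set. By the definition of hyperbolicity given before the theorem, the fact that the whole manifold $M$ carries a hyperbolic splitting $TM = E^s \oplus \langle X \rangle \oplus E^u$ means precisely that $X_t$ is an Anosov flow. Combined with topological mixing of $M$ under $X_t$, this yields the statement of the corollary.

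There is no significant obstacle here, the only thing to check is the matching of definitions; the real content of the corollary is concentrated in Theorem \ref{t.weakhyplocal}, and once that is established, the global version follows by specializing the semi-local statement to the trivial isolating block $U = M$.
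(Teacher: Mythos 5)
Your proof is correct and is exactly the argument the paper intends: take $U=M$ as the trivial isolating block, check that the global definition of robust weak specification matches the semi-local one, apply Theorem \ref{t.weakhyplocal}, and note that a hyperbolic set equal to the whole manifold is by definition an Anosov flow. No issues.
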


We say that a subset $\SR\subset \Mundo$ is a residual subset if contains a countable intersection of open and dense sets. The finite intersection of residual subsets is a residual subset. Since $\Mundo$ is a Baire space when equipped with the $C^1$-topology, any residual subset of $\Mundo$ is dense.

We will say that a property holds \emph{generically} if there exists a residual subset $\SR$ such that any $X\in \SR$ has that property. Sometimes, we will say that a vector field $X$ is generic when we refer that $X$ could be taken in a residual subset. As an example, it is well known that the set of Kupka-Smale vector fields is residual in $\Mundo$, see \cite{Palis}, so we could say that generic vector fields are Kupka-Smale.

Our main result dealing with generic vector fields is the following.

\begin{theorem}\label{maintheo continuous}
There is a residual subset $\SR$ of $\Mundo$ such that if $X\in\SR$ and $X$ satisfies the weak specification property, then $X$ is Anosov.
\end{theorem}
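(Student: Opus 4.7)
The plan is to mimic the argument for Theorem \ref{t.weakhyplocal} but with classical $C^1$-generic results standing in for the $C^1$-robustness hypothesis. First I would take $\SR$ to be the intersection of the following residual subsets of $\Mundo$: the Kupka--Smale vector fields (so every element of $\per(X) \cup \sing(X)$ is hyperbolic with transverse invariant manifolds); the vector fields to which Pugh's general density theorem applies, giving $\overline{\per(X)} = \Omega(X)$; and any further residual set supplied by the generic analogues of the arguments of Sections 4--6 that are needed when robustness is unavailable.

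Now take $X \in \SR$ satisfying the weak specification property. The weak specification property by itself forces $X_t$ to be topologically mixing on $M$, so $\Omega(X) = M$; combined with Pugh's theorem this gives $\overline{\per(X)} = M$, and the Kupka--Smale condition makes every periodic orbit and every singularity hyperbolic. From here one can repeat the analysis of Section 4 to show that all hyperbolic periodic orbits share a common index, the obstruction otherwise being a pair of pieces of orbit of mismatched unstable dimension that cannot be simultaneously shadowed.

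The second step is to rule out singularities, following the scheme used in Section 6 for the robust case but with Pugh's density replacing the $C^1$-neighborhood. Suppose $\sigma \in \sing(X)$. Density of $\per(X)$ near $\sigma$ produces hyperbolic periodic orbits approaching $\sigma$; feeding a long excursion near $\sigma$ and a long excursion near such a periodic orbit into the weak specification property yields, via Hayashi's connecting lemma, a homoclinic or heteroclinic connection that contradicts the Kupka--Smale transversality of invariant manifolds. The only way out is the degenerate case where $M$ collapses to a single singularity, excluded since $\dim M \geq 3$ and $\overline{\per(X)} = M$.

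Finally, with singularities excluded and hyperbolic periodic orbits of a common index dense in $M$, the dominated splittings built along periodic orbits should extend by density to a genuine hyperbolic splitting on $M$, yielding that $X$ is Anosov. This last upgrade is the hardest step: in the robust setting one has a $C^1$-neighborhood that supplies uniform bounds on angles and contraction rates through perturbations, whereas here those uniform bounds must be extracted from the weak specification property alone, by a generic-continuity argument that combines Pugh's density, the connecting lemma, and semi-continuity of the invariant splittings to rule out any loss of domination in the limit. Carrying out this uniformization rigorously in the generic (as opposed to robust) framework is where I would expect the bulk of the technical work.
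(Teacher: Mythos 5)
Your first two steps track the paper: the residual set is essentially the intersection of the Kupka--Smale residual, the Pugh general-density residual, and the residual sets produced in Section 5; mixing plus Pugh gives $\overline{\per(X)}=M$; and singularities are excluded because Theorem \ref{t.dicotomia} (whose proof is exactly the index-versus-transversality clash you describe, obtained from Theorem \ref{intersects} together with Lemma \ref{l.vazio} --- no connecting lemma is needed in the generic case) leaves only the alternative $\crit(X)=\{\sigma\}$, which is impossible once periodic orbits are dense in $M$ with $\dim M\geq 3$.

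The genuine gap is your final step. You propose to pass from ``hyperbolic periodic orbits of a common index dense in $M$'' to a global hyperbolic splitting by ``density'' and ``semi-continuity of the invariant splittings.'' This cannot work as stated: density of hyperbolic periodic orbits of constant index does not by itself give uniform contraction/expansion rates or uniformly bounded angles along those orbits, and without such uniformity the splittings need not extend continuously, let alone hyperbolically, to the closure. The paper closes this gap by a different mechanism: it first proves that a generic $X$ with the weak specification property is a \emph{star flow}, i.e.\ $X\in\SG^1(M)$ --- if $X$ were not star, nearby fields would have critical orbits with $\tfrac{\delta}{2}$-weak hyperbolic eigenvalues, Lemma \ref{whe.cont} would transfer a $\delta$-weak eigenvalue to a periodic orbit of $X$ itself, and Proposition \ref{no.whe.cont} forbids this under weak specification. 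It then invokes the Gan--Wen theorem (Theorem \ref{Gan-Wen}) that a nonsingular star flow is Axiom A, and concludes Anosov from the density of periodic orbits. The ``uniformization'' you flag as the bulk of the technical work is precisely the content of Proposition \ref{no.whe.cont} (uniform hyperbolicity of the periodic orbits) combined with the deep Gan--Wen theorem; neither is obtainable by a soft generic-continuity argument, so your outline as written does not close.
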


\section{Consequences of the Weak Specification Property}

\begin{lemma}
\label{l.transitive}
Let $X$ be a vector field with an invariant compact set $\Lambda$. If $X|_{\Lambda}$ has the weak specification property then the flow on $\Lambda$ is topologically mixing.
\end{lemma}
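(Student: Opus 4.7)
The plan is to unpack the weak specification property for a cleverly chosen two-piece specification whose pieces are anchored at representatives of the two open sets we want to connect.

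First, I would fix non-empty open subsets $U,V$ of $\La$, pick base points $u\in U$ and $v\in V$, and choose $\eps>0$ small enough that
\[
B(v,\eps)\cap \La\subset V \quad\text{and}\quad B(u,\eps)\cap \La\subset U.
\]
Let $K=K(\eps)$ be the spacing constant given by the weak specification property.

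Next, for each $t\ge K+1$ I would build the weak specification $S=(\tau,P)$ with $\tau=\{I_1,I_2\}$, $I_1=[0,1]$ and $I_2=[t,t+1]$, defined by
\[
P(s)=X_s(v) \text{ for } s\in I_1,\qquad P(s)=X_{s-t}(u) \text{ for } s\in I_2.
\]
The compatibility condition $X_{t_2}(P(t_1))=X_{t_1}(P(t_2))$ holds on each $I_i$ by construction, and $S$ is $K$-spaced since $t\ge b_1+K$. By hypothesis, there exists $z\in\La$ which $\eps$-shadows $S$; in particular $d(z,v)<\eps$ and $d(X_t(z),u)<\eps$, so $z\in V$ and $X_t(z)\in U$. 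Therefore $U\cap X_t(V)\neq\emp$ for every $t\ge K+1$, which is exactly topological mixing.

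I do not anticipate a serious obstacle: the argument is a direct application of the weak specification property with a specification whose two pieces are a short orbit segment through $v$ and a short orbit segment through $u$, separated by the gap $t$. The only minor point to verify is that one is allowed to use such short intervals, which is immediate from the definition of specification as stated in the paper.
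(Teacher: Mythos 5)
Your proof is correct and follows essentially the same strategy as the paper's: apply the weak specification property to a two-piece specification anchored at points of the two open sets, separated by an arbitrary gap of length at least $K$, and read off the shadowing estimate at the left endpoints of the two intervals. The only cosmetic differences are that you use intervals of fixed length $1$ and shift time in the parametrization of the second piece, whereas the paper pre-flows the second base point backward and shrinks the interval length via continuity; both are valid.
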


\begin{proof}
Let $U$ and $V$ be two open sets of $\Lambda$, $x_0\in U$ and $y_0\in V$. There exists $\eps>0$ such that $B(x_0,2\eps)\subset U$ and $B(y_0,2\eps)\subset V$. The weak specification property gives us some $K>0$. Now we fix $Q>0$ and define $x=x_0$ and $y=X_{-K-Q}(y_0)$, and choose $\eta>0$ such that if $I_1=[0,\eta]$ and $I_2=[K+Q,K+Q+\eta]$ then $X_{I_1}(x)\subset B(x_0,\eps)$ and $X_{I_2}(y)\subset B(y_0,\eps)$. This gives a $K$-spaced specification, thus there exists $z$ which $\eps$-shadows this specification. By the triangle inequality, we have that $X_{K+Q}(U)\cap V\neq \emptyset$, and this holds for every $Q>0$.
\end{proof}


We define the strong stable and stable manifolds of a hyperbolic periodic point $p$ respectively as:
$$W^{ss}(p)=\{y\in M;\lim_{t\to +\infty}d(X_t(y),X_t(p))=0\}$$ and $$W^s(O(p))=\bigcup_{t\in \R}W^{ss}(X_t(p)).$$
If $\eps>0$ the local strong stable manifold is defined as
$$W^{ss}_{\eps}(p)=\{y\in M;d(X_t(y),X_t(p))<\eps \textrm{ if }t\geq 0\}.$$
By the stable manifold theorem, there exists an $\eps=\eps(p)>0$ such that
$$W^{ss}(p)=\bigcup_{t\geq 0}X_{-t}(W_{\eps}^{ss}(X_t(p))).$$
If $\sigma$ is a hyperbolic singularity of $X$ then there exists an $\eps=\eps(\sigma)>0$ such that
$$W_{\eps}^s(\sigma)=\{y\in M;d(X_t(y),\sigma)<\eps \textrm{ if }t\geq 0\}.$$ and
$$W^s(\sigma)=\bigcup_{t\geq 0}X_{-t}(W_{\eps}^{s}(\sigma)).$$
Analogous definitions holds for unstable manifolds.

\begin{lemma}\label{contraexemplo}
Let $X$ be a vector field and $\Lambda$ be an invariant compact set. If $\Lambda$ has a hyperbolic singularity $\sigma$ and for all $x \in \Lambda$, $x \in W^s(\sigma) \cap W^u(\sigma)$, then $\Lambda $ doesn't have the weak specification property.
\end{lemma}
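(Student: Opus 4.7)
The plan is to derive a contradiction by combining the topological mixing guaranteed by Lemma~\ref{l.transitive} with the strong constraint that every forward orbit in $\Lambda$ must converge to $\sigma$. First, I would observe that the statement is meaningful only when $\Lambda\neq\{\sigma\}$, since $\{\sigma\}$ trivially satisfies weak specification; I argue under this tacit assumption. Suppose, for contradiction, that $X|_\Lambda$ has the weak specification property. By Lemma~\ref{l.transitive} the flow $X|_\Lambda$ is topologically mixing on the compact metric space $\Lambda$, so a standard Baire-category argument (using a countable basis of open sets in $\Lambda$) produces a point $x_0\in\Lambda$ whose forward orbit is dense in $\Lambda$. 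Since the orbit of $\sigma$ reduces to a single point, necessarily $x_0\neq\sigma$.

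Second, I would exploit the $W^s$-hypothesis. Since $x_0\in W^s(\sigma)$ and $\sigma$ is hyperbolic, the stable manifold theorem yields $X_t(x_0)\to\sigma$ as $t\to+\infty$, so $\omega(x_0)=\{\sigma\}$. Combined with density of the forward orbit this gives the explicit description
\[
\Lambda \;=\; \overline{\{X_t(x_0):t\geq 0\}} \;=\; \{X_t(x_0):t\geq 0\}\cup\{\sigma\},
\]
so $\Lambda\setminus\{\sigma\}$ coincides with the forward semi-orbit of $x_0$.

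The final step is to exploit the invariance of $\Lambda$ to rule out $x_0$. The point $X_{-1}(x_0)$ lies in $\Lambda$, so either $X_{-1}(x_0)=\sigma$, which forces $x_0=X_1(\sigma)=\sigma$; or $X_{-1}(x_0)=X_s(x_0)$ for some $s\geq 0$, which yields $X_{s+1}(x_0)=x_0$, making $x_0$ periodic. But a periodic orbit whose $\omega$-limit equals $\{\sigma\}$ must degenerate to $\sigma$ itself. Both alternatives contradict $x_0\neq\sigma$, completing the argument.

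The main subtlety, in my view, is conceptual rather than technical: one should notice that the hypothesis $x\in W^u(\sigma)$ plays no role in this argument, as $W^s$-membership together with topological mixing and invariance already collapses $\Lambda$ to $\{\sigma\}$. The only genuinely delicate point is the implicit non-triviality assumption $\Lambda\neq\{\sigma\}$, which I would simply flag at the start of the proof; apart from that, every step is routine.
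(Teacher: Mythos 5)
Your proof is correct, but it takes a genuinely different route from the paper's. The paper argues contrapositively and in two cases: if $\Lambda$ consists of $\sigma$ and a single regular orbit it asserts directly that the flow is not mixing, and if there are at least two regular orbits it shows by a distance estimate that no regular orbit can be dense --- the orbit of any regular $p$ eventually stays inside a small ball around $\sigma$ in \emph{both} time directions (this is where both the $W^s$ and $W^u$ hypotheses are used), so it keeps a positive distance from any point on a different orbit; non-transitivity then contradicts Lemma~\ref{l.transitive}. You instead assume weak specification, invoke Lemma~\ref{l.transitive} to get mixing, extract a point $x_0$ with dense \emph{forward} orbit by Baire category, use only the $W^s$ hypothesis to get $\omega(x_0)=\{\sigma\}$ and hence the explicit description $\Lambda=\{X_t(x_0):t\geq 0\}\cup\{\sigma\}$, and then let flow-invariance under $X_{-1}$ force $x_0$ to be either $\sigma$ or periodic, both impossible. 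Your version buys three things: it avoids the paper's case split (whose first case is dismissed with an unjustified ``easy to see''), it shows the $W^u$ hypothesis is superfluous, and it makes explicit the non-triviality assumption $\Lambda\neq\{\sigma\}$, which is genuinely needed since $\{\sigma\}$ satisfies the hypotheses vacuously yet trivially has weak specification. The paper's version is more elementary in that it needs no Baire-category extraction of a dense orbit and works directly with full orbits. One small point to keep in mind: your Baire step uses that mixing yields a dense \emph{forward} semi-orbit (not just a dense full orbit); this is standard for mixing flows on compact metric spaces, but it is worth stating, since mere topological transitivity in the ``$U\cap X_t(V)\neq\emptyset$ for some $t$'' sense would only give a dense full orbit and your $\omega$-limit argument genuinely needs the forward version.
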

\begin{proof}
First we will see the case where $\Lambda = \{\sigma\} \cup (O(x))$. By hypothesis $O(x) \subset W^s(\sigma) \cap W^u(\sigma)$ and is easy to see that  $X_t|_{\Lambda}$ is not a topologically mixing flow. This means that $X|_{\Lambda}$ has not the weak specification property by lemma \ref{l.transitive}.

Now we will deal with the set that has more than one regular orbit. Let $p$ and $q$ any points in $\Lambda - \{\sigma\}$ satisfying $O(p) \cap O(q) = \emptyset$ and take the ball $B(\sigma, \rho)$ with $\rho = d(\sigma , q) / 2$. Since $p \in W^s(\sigma) \cap W^u(\sigma)$ there exists $T>0$ such that for all $t > T$ we have $X_{-t}(p)$ and $X_t(p) \in B(\sigma , \rho)$. This means that $d(X_{-t}(p), q) > \rho$ and $d(X_{t}(p), q) > \rho$ for all $t > T$. The set $X_{[-T,T]}(p)$ is compact and then $d(X_{[-T,T]}(p), q) = \beta > 0$ because $O(p) \cap O(q) = \emptyset$.

This proves that there exists a positive distance between $q$ and $O(p)$, and therefore $O(p)$ is not a dense orbit in $\Lambda$. Since we take $p$ arbitrarily, we conclude that the set $\Lambda$ is not transitive for $X$ and by lemma \ref{l.transitive} $X|_{\lambda}$ has not the weak specification property as we want.
\end{proof}

The dimension of the stable manifold $W^s(O(p))$ is called the index of $O(p)$ and we denote it by $\textrm{index}(O(p))$. We remark that by hyperbolicity, if $\sigma$ is a critical hyperbolic orbit of a vector field $X$ then there exists a neighborhood $U \subset M$ of $\sigma$ and a $C^1$-neighborhood $\mathcal{U}$ of $X$ such that if $Y \in \mathcal{U}$, $Y$ has a critical hyperbolic orbit $\sigma_{Y}$ on $U$ and $\textrm{index}(\sigma) = \textrm{index}(\sigma_Y)$. Such a $\sigma_Y$ is called the continuation of $\sigma$.

\begin{theorem} \label{intersects}
Let $X$ be a  vector field with an invariant compact set $\Lambda$. If $X|_{\Lambda}$ has the weak specification property then for every two distinct hyperbolic critical orbits $O$ and $O'$ the invariant manifolds $W^u(O)$ and $W^s(O')$ intersects.
\end{theorem}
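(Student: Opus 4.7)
The plan is to use the weak specification property to manufacture a single orbit whose backward branch tracks a chosen orbit on $O$ and whose forward branch (after a fixed delay $K$) tracks a chosen orbit on $O'$, and then invoke the local invariant manifold theorems to place a limit point into $W^u(O) \cap W^s(O')$.

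First, I would fix $\eps > 0$ small enough that two consequences of the local stable/unstable manifold theorem hold simultaneously: (i) whenever $y \in M$ satisfies $d(X_t(y), X_t(p)) \leq \eps$ for every $t \leq 0$ and some $p \in O$, one has $y \in W^u(O)$; and (ii) whenever $y' \in M$ satisfies $d(X_s(y'), X_s(q)) \leq \eps$ for every $s \geq 0$ and some $q \in O'$, one has $y' \in W^s(O')$. Both statements are standard for $\eps$ sufficiently small and are uniform in the base point $p$ or $q$ because each critical orbit is compact.

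Second, I would obtain $K = K(\eps)$ from the weak specification property, pick $p \in O$ and $q \in O'$, and, for each $n \in \N$, build the $K$-spaced weak specification $S_n = (\tau_n, P_n)$ with $\tau_n = \{[-n, 0],\, [K, K+n]\}$, $P_n(t) = X_t(p)$ for $t \in [-n, 0]$, and $P_n(t) = X_{t-K}(q)$ for $t \in [K, K+n]$. The hypothesis yields $z_n \in \Lambda$ that $\eps$-shadows $S_n$. By compactness of $\Lambda$, extract a subsequence $z_{n_k} \to z \in \Lambda$.

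Third, for each fixed $t \leq 0$ the inequality $d(X_t(z_{n_k}), X_t(p)) < \eps$ holds for all $k$ large enough, so continuity of the flow gives $d(X_t(z), X_t(p)) \leq \eps$ for every $t \leq 0$, whence $z \in W^u(O)$ by (i). Analogously, $d(X_t(z), X_{t-K}(q)) \leq \eps$ for every $t \geq K$; after shifting by $K$ this yields $X_K(z) \in W^s(O')$ via (ii), and since $W^s(O')$ is $X$-invariant we conclude $z \in W^s(O')$. Therefore $z \in W^u(O) \cap W^s(O')$, which is nonempty.

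The step I expect to require the most care is (i): the unstable set $W^u(O)$ is the flow-saturation of strong unstable manifolds along $O$, so I must argue that a point whose backward orbit $\eps$-shadows a specific parametrization $t \mapsto X_t(p)$ actually lies on $W^{uu}$ of some point of $O$ (and not just in an $\eps$-tube around $O$). The standard way out is to shrink $\eps$ so that the local tubular/linearizing chart around $O$ makes the shadowing condition identify $z$ with a point of the local strong unstable manifold; the analogous remark handles the singularity case, where the strong unstable manifold is simply $W^u(\sigma)$ itself. Once that uniform $\eps$ is fixed, the rest of the argument is a straightforward compactness and limit-passing exercise.
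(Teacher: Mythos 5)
Your proposal is correct and follows essentially the same route as the paper: build longer and longer $K$-spaced weak specifications whose first piece parametrizes a backward orbit segment in $O$ and whose second piece parametrizes a forward segment in $O'$, extract a limit of the shadowing points by compactness, and identify the limit as lying in $W^{uu}_{\eps}\subset W^u(O)$ and (after the time-$K$ shift) in $W^{ss}_{\eps}\subset W^s(O')$ via the local invariant manifold theorem; the only cosmetic difference is that the paper places the intervals at $[0,t]$ and $[K+t,K+2t]$ and then translates the shadowing point by $X_t$, whereas you use $[-n,0]$ and $[K,K+n]$ directly. The delicate point you flag in (i) is handled in the paper exactly as you suggest, by taking $\eps\leq\min\{\eps(p),\eps(q)\}$ so that the shadowing condition literally defines membership in $W^{uu}_{\eps}(p)$ and $W^{ss}_{\eps}(q)$.
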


\begin{proof}
First, we deal with the case where $O=O(p)$, $O'=O(q)$ and $p$ and $q$ are periodic points. We already know that there are no sinks or sources, so $p$ and $q$ must be saddles. Let $\eps=\min\{\eps(p),\eps(q)\}$, and $K$ given by specification. If $t>0$ then take $I_1=[0,t]$ and $I_2=[K+t,K+2t]$. Now define $P(s)=X_{s-t}(p)$ if $s\in I_1$ and $P(s)=X_{s-K-t}(q)$ if $s\in I_2$. Note that this is a $K$-spaced weak specification.

So, there exists $x_t$ which shadows this weak specification:
$$d(X_s(x_t),P(s))\leq \eps\textrm{ if }s\in I_1\cup I_2.$$
Using the change of variables $u=t-s$, for every $u\in[0,t]$ we have:
$$d(X_{-u}(X_t(x_t)),X_{-u}(p))=d(X_{t-u}(x_t),X_{-u}(p))\leq \eps$$
and using $u=s-K-t$, for every $u\in [0,t]$ we have
$$d(X_u(X_{K+t}(x_t)),X_u(q))\leq \eps.$$
If $y_t=X_t(x_t)$ then we can assume that $y_t\to y$. And taking limits in the previous inequalities we obtain
$$d(X_{-u}(y),X_{-u}(p))\leq \eps\textrm{ for every $u\geq 0$, and}$$
$$d(X_u(X_{K}(y)),X_u(q))\leq \eps\textrm{ for every $u\geq 0$}.$$
The first one says that $y\in W^{uu}_{\eps}(p)\subset W^u(O(p))$ and the second one says that $X_K(y)\in W^{ss}_{\eps}(q)$, hence $y\in W^s(O(q))$.

Now, we deal with the case where $O \in Sing(X)$, $O' \in Sing(X)$ or both. In all cases we use the same proof, we just replace $X_{s-t}(p)$ and $X_{-u}(p)$ by $\sigma$ if $\sigma = O \in Sing(X)$ and $X_{s-K-t}(q)$ and $X_{u}(q)$ by $\sigma'$ if $\sigma' = O' \in Sing(X)$ and we conclude that $y \in W_{\eps}^u(\sigma) \subset W^u(\sigma)$ and $X_K(y) \in W_{\eps}^s(\sigma')$, hence $y \in W^s(\sigma')$.

\end{proof}

Now, we remark a simple property of Kupka-Smale vector fields.

\begin{lemma}\label{l.vazio}
Let $X \in \Mundo$ be a Kupka-Smale vector field and let $\sigma , \tau$ be critical hyperbolic orbits for $X$ such that $\dim W^s(\sigma) + \dim W^u(\tau) \leq \dim M$ then $W^s(\sigma) \cap W^u(\tau) = \emptyset$.
\end{lemma}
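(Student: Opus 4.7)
The plan is to argue by contradiction, assuming there exists $x \in W^s(\sigma) \cap W^u(\tau)$ (for two \emph{distinct} critical orbits, which is evidently the intended reading since otherwise the singleton $\{\sigma\}$ always lies in $W^s(\sigma)\cap W^u(\sigma)$ for a singularity). The first step is to invoke the Kupka-Smale hypothesis to obtain transversality at $x$:
\[
T_x W^s(\sigma) + T_x W^u(\tau) = T_x M.
\]
From the standard linear-algebra identity $\dim(A+B) = \dim A + \dim B - \dim(A\cap B)$ we then get
\[
\dim M = \dim W^s(\sigma) + \dim W^u(\tau) - \dim\bigl(T_x W^s(\sigma)\cap T_x W^u(\tau)\bigr),
\]
and combining this with the hypothesis $\dim W^s(\sigma)+\dim W^u(\tau)\le \dim M$ forces $\dim\bigl(T_x W^s(\sigma)\cap T_x W^u(\tau)\bigr)=0$, i.e.\ the two tangent spaces meet only at the origin.

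Next, I would use flow-invariance. Since both $W^s(\sigma)$ and $W^u(\tau)$ are $X_t$-invariant submanifolds, the vector $X(x)$ lies in $T_x W^s(\sigma)\cap T_x W^u(\tau)=\{0\}$. Hence $X(x)=0$, so $x$ is a singularity of $X$ and its orbit collapses to the single point $\{x\}$. This is the key mileage obtained from the borderline case $\dim W^s(\sigma)+\dim W^u(\tau)=\dim M$, which cannot be discarded purely by dimension counting.

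Finally, since $x\in W^s(\sigma)$ and $X_t(x)=x$ for all $t\in\R$, we have $\omega(x)=\{x\}\subseteq O(\sigma)$. A periodic orbit contains no singularities, so $\sigma\in\sing(X)$ and $x=\sigma$. The symmetric argument applied to $W^u(\tau)$ gives $x=\tau$, whence $\sigma=\tau$, contradicting the distinctness of the two critical orbits. The main (mild) obstacle is the equality case $\dim W^s(\sigma)+\dim W^u(\tau)=\dim M$, where transversality alone allows isolated intersection points; this is precisely where flow-invariance must be brought in to identify these points as singularities and then pin them down to the critical orbits themselves.
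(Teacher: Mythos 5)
Your proof is correct and rests on the same key observation as the paper's: the flow direction $X(x)$ is tangent to both invariant manifolds, and this is incompatible with Kupka--Smale transversality when $\dim W^s(\sigma)+\dim W^u(\tau)\le\dim M$ unless $X(x)=0$. You simply run the argument in the contrapositive direction and, unlike the paper's proof (which tacitly assumes the intersection point is regular so that $T_xO(x)$ is one-dimensional), you explicitly dispose of the case $X(x)=0$ by showing such an $x$ would have to be a common singularity of the two (distinct) critical orbits --- a minor but genuine gain in completeness.
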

\begin{proof}
Consider first the case where  $\dim W^s(\sigma) + \dim W^u(\tau) < \dim M$. Since $X$ is a Kupka-Smale vector field, we have that $W^s(\sigma) \cap W^u(\tau) = \emptyset$ as we wanted.

Now consider the case where $\dim W^s(\sigma) + \dim W^u(\tau) = \dim M$.

Suppose  there exists $x \in W^s(\sigma) \cap W^u(\tau)$. Then $O(x) \subset W^s(\sigma) \cap W^u(\tau)$ and   we can split $$T_x(W^s(\sigma))= T_x(O(x)) \oplus E^1$$ and $$T_x(W^u(\tau))=T_x(O(x)) \oplus E^2.$$  So, $$\dim (T_x(W^s(\sigma)) + T_x(W^u(\tau)))  < \dim W^s(\sigma) + \dim W^u(\tau) = \dim M.$$
Thus $W^s(\sigma)$ is not transverse to $W^u(\tau)$ and this is a contradiction because $X$ is a Kupka-Smale vector field. This shows us that  $W^s(\sigma) \cap W^u(\tau) = \emptyset$ and proves the lemma.
\end{proof}

With this, we obtain a key consequence of theorem \ref{intersects}.

\begin{theorem}
\label{t.dicotomia}
Let $X|_{\Lambda}$ be a vector field which has the weak specification property robustly, such that any critical orbit is hyperbolic. Suppose that $Crit(X)\cap \Lambda\neq \emptyset$ then either $Crit(X) \cap \Lambda\subset PO(X)$ or $Crit(X)\cap \Lambda=\{\sigma\}$ for some singularity $\sigma\in Sing(X)$. The same holds if $X$ is a generic vector field such that $X|_{\Lambda}$ has the weak specification property.
\end{theorem}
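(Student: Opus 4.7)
The plan is to argue by contradiction: suppose $\Lambda$ contains a hyperbolic singularity $\sigma$ together with a second critical orbit $O(\tau)\neq\{\sigma\}$, and derive a contradiction from a dimension count built on Theorem \ref{intersects} and Lemma \ref{l.vazio}.

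First I would reduce both the robust and the generic case to a Kupka--Smale setting. In the generic case this is immediate: intersecting the residual set $\SR$ with the residual set of Kupka--Smale vector fields yields a still-residual set, so we may assume $X$ itself is Kupka--Smale. In the robust case, since Kupka--Smale vector fields are $C^1$-dense, I would pick $Y$ Kupka--Smale in the $C^1$-neighborhood $\SU$ of $X$ furnished by the robust weak specification hypothesis; by hyperbolicity, the continuations $\sigma_Y$ and $\tau_Y$ are distinct hyperbolic critical orbits whose stable/unstable dimensions match those of $\sigma,\tau$, they sit inside $\Lambda_Y(U)$, and $Y|_{\Lambda_Y(U)}$ still has the weak specification property by the very definition of robustness.

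Next I would apply Theorem \ref{intersects} to the two distinct hyperbolic critical orbits in both directions, getting $W^u(\sigma_Y)\cap W^s(\tau_Y)\neq\emptyset$ and $W^u(\tau_Y)\cap W^s(\sigma_Y)\neq\emptyset$. Since $Y$ is Kupka--Smale, the contrapositive of Lemma \ref{l.vazio} gives, with $n=\dim M$,
\[
\dim W^u(\sigma_Y)+\dim W^s(\tau_Y)\;\geq\;n+1 \qquad\text{and}\qquad \dim W^u(\tau_Y)+\dim W^s(\sigma_Y)\;\geq\;n+1,
\]
and summing these yields
\[
\bigl(\dim W^s(\sigma_Y)+\dim W^u(\sigma_Y)\bigr)+\bigl(\dim W^s(\tau_Y)+\dim W^u(\tau_Y)\bigr)\;\geq\;2n+2.
\]

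To close the argument I would invoke the standard identities: for a hyperbolic singularity the stable and unstable manifolds split $T_\sigma M$, so their dimensions sum to $n$; for a hyperbolic periodic orbit they share the flow direction, so their dimensions sum to $n+1$. Hence the left-hand side above equals $2n$ when $\tau$ is also a singularity and $2n+1$ when $\tau$ is periodic; in either case it is strictly less than $2n+2$, a contradiction. This rules out the coexistence in $\Lambda$ of a singularity with any other critical orbit, which is exactly the claimed dichotomy. The step I expect to require the most care is the robust reduction: one must check that $Y$ can be chosen close enough to $X$ that the continuations $\sigma_Y,\tau_Y$ remain in $\Lambda_Y(U)$ with unchanged indices. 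This is the standard openness of hyperbolicity together with the continuation of isolated invariant sets, but it is the only place in the proof that appeals to a density statement instead of the weak specification hypothesis itself.
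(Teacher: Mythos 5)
Your proposal is correct and follows essentially the same route as the paper: reduce to a Kupka--Smale vector field (by perturbation in the robust case, by residuality in the generic case), then play Theorem \ref{intersects} against Lemma \ref{l.vazio} to get a contradiction from a dimension count. The only difference is cosmetic: the paper splits into the cases $j>i$ and $j\leq i$ and uses a single intersection, whereas you symmetrize by using both intersections and summing the two inequalities against $\dim W^s+\dim W^u=n$ (singularity) or $n+1$ (periodic orbit), which avoids the case analysis but is the same underlying argument.
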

\begin{proof}
First, let $X|_{\Lambda}$ be a vector field which has the weak specification property robustly, and let $\mathcal{U}$ be a $C^1$-neighborhood of $X$ given by the definition.

If the conclusion is false then $\Lambda$ has a hyperbolic singularity $\sigma$ with index $i$ and a distinct hyperbolic critical orbit $\tau$ with index $j$. Then there is a $C^1$-neighborhood $\mathcal{V}  \subset \mathcal{U}$ of $X$ such that for any $Z \in \mathcal{V}$, there are the continuations $\sigma_Z , \tau_Z \subset \Lambda_Z(U)$ of $\sigma$ and $\tau$  respectively.

If $j > i$ then  $\dim W^s(\sigma) + \dim W^u(\tau) \leq \dim M$. By Kupka-Smale's theorem, there exists $W \in \mathcal{V}$ such that $\dim W^s(\sigma_W) + \dim W^u(\tau_W) \leq \dim M$ and $W|_{\Lambda_W(U)}$ has the weak specification property. By Lemma \ref{l.vazio}, we have that $W^s(\sigma_W) \cap W^u(\tau_W) = \emptyset$ and this contradicts Theorem \ref{intersects}.

If $j \leq i$ then  $\dim W^u(\sigma) + \dim W^s(\tau) \leq \dim M$ and by the same arguments we have a contradiction.

In the generic case, the proof is the same, since we can assume that $X$ is Kupka-Smale.
\end{proof}

\section{Periodic Orbits}

In this section we analyze the hyperbolicity of the periodic orbits in the presence of the weak specification property. First, we show a result which its proof is  similar to the proof of theorem \ref{t.dicotomia}.

\begin{theorem}
\label{l.index}
If $X|_{\Lambda}$ is a vector field which has the weak specification property robustly, then the index of all hyperbolic periodic orbits in $\Lambda$ which are  saddles is constant and this property is robust with the same index.
\end{theorem}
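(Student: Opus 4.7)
The plan is to mimic closely the proof of Theorem \ref{t.dicotomia}, replacing the singularity/critical-orbit dichotomy by a pair of saddle periodic orbits of distinct indices. Let $\SU$ be the $C^1$-neighborhood of $X$ from the definition of robustness, so that $Y|_{\Lambda_Y(U)}$ has the weak specification property for every $Y\in\SU$. Suppose for contradiction that there exist hyperbolic saddle periodic orbits $O(p),O(q)\subset\Lambda$ with indices $i=\textrm{index}(O(p))<j=\textrm{index}(O(q))$.

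First I would record the key dimension computation. For a hyperbolic saddle periodic orbit $O(r)$ of a flow we have $\dim W^s(O(r))+\dim W^u(O(r))=\dim M+1$ (the flow direction is common). Hence
$$\dim W^s(O(p))+\dim W^u(O(q))=i+(\dim M-j+1)=\dim M+1-(j-i)\leq \dim M.$$
Because $p$ and $q$ are hyperbolic, they admit continuations $p_Y,q_Y$ with the same indices for every $Y$ in a smaller neighborhood $\SV\subset\SU$, and these continuations lie in the maximal invariant set $\Lambda_Y(U)$ (since $\Lambda$ is isolated in $U$ and they stay close to $p,q$).

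Now I would invoke density of Kupka–Smale vector fields (see \cite{Palis}) to pick some $W\in\SV$ which is Kupka–Smale. By Lemma \ref{l.vazio} applied to the hyperbolic periodic orbits $O(p_W),O(q_W)$, the inequality above forces $W^s(O(p_W))\cap W^u(O(q_W))=\emp$. On the other hand, since $W\in\SV\subset\SU$, the flow $W|_{\Lambda_W(U)}$ has the weak specification property, so Theorem \ref{intersects} gives $W^s(O(p_W))\cap W^u(O(q_W))\neq\emp$. This contradiction proves that every hyperbolic saddle periodic orbit in $\Lambda$ has a common index $i_0$.

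For the robustness assertion, I would simply observe that the same argument applies verbatim to every $Y\in\SV$ (since $Y$ still has the weak specification property robustly on a smaller neighborhood): all hyperbolic saddle periodic orbits of $Y$ in $\Lambda_Y(U)$ must share a common index. To see that this common index equals $i_0$, it is enough to note that the continuation $p_Y\in\Lambda_Y(U)$ of any saddle periodic orbit $p\in\Lambda$ has index $i_0$ by the persistence of hyperbolicity. I do not expect any serious obstacle: the only delicate point is making sure the continuations remain inside the maximal invariant set of the isolating block, which is standard for hyperbolic periodic orbits of an isolated invariant set.
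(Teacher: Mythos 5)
Your proof is correct and follows essentially the same route as the paper's: perturb to a nearby Kupka--Smale field, use the dimension count together with Lemma \ref{l.vazio} to force the invariant manifolds of the two continuations to be disjoint, and contradict Theorem \ref{intersects}. The only (harmless) difference is organizational: the paper runs the argument directly for an arbitrary $Y\in\SU$, which yields the robustness clause in one pass, whereas you treat $X$ first and then note the argument applies verbatim to nearby fields; you also spell out the dimension computation $\dim W^s(O(p))+\dim W^u(O(q))=\dim M+1-(j-i)$ more explicitly than the paper does.
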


\begin{proof}
Let $X|_{\Lambda}$ be a vector field which has the weak specification property robustly, and let $\mathcal{U}$ be as in the property.

Fix $Y \in \mathcal{U}$, and let $\sigma , \tau \subset \Lambda_Y(U)$ be hyperbolic periodic orbits of $Y_t$ which are saddles. Then there is a $C^1$-neighborhood $\mathcal{V}_Y \subset \mathcal{U}$ of $Y$ such that for any $Z \in \mathcal{V}_Y$, there are the continuations $\sigma_Z , \tau_Z \subset \Lambda_Z(U)$ of $\sigma$ and $\tau$  respectively.

Suppose that $\textrm{index}(\sigma) < \textrm{index}(\tau)$ (the other case is similar), then for any $Z \in \mathcal{V}_Y$ we have
$$\dim W^s(\sigma_Z) + \dim W^u(\tau_Z) \leq \dim M.$$

But since, we can take $Z$ as a Kupka-Smale vector field. By lemma \ref{l.vazio} we have that
$$W^s(\sigma_Z) \cap W^u(\tau_Z) = \emptyset.$$

On the other hand, since $Z \in \mathcal{U}$, $Z|_{\Lambda_Z(U)}$ has specification property, and this contradicts Theorem \ref{intersects}.
\end{proof}

\vspace{0.1in}

{\it Hyperbolicity of Periodic Orbits}

\vspace{0.1in}

Let $X\in\Mundo$, $p\in M$  be a  point in a periodic orbit of $X_t$ with period $T>0$ and $T_pM(s)=\{v \in T_pM; \|v\|<s\}$. Define $\langle X(p)\rangle$ as the subspace generated by $X(p)$, and set $$N_p=\langle X(p)\rangle^\perp\quad\mbox{ and }\quad N_{p,s}=N_p\cap T_pM(s),\;\mbox{ for }0<s<1$$ such that the exponential map $\exp_p:T_pM(s)\rightarrow M$ is well defined for all $p\in M$. Finally define $\Pi_{p,s}=\exp_p(N_{p,s})$. Then for a given $p'=X_{t_0}(p)$ with $t_0>0$, there are $r_0>0$ and a $C^1$ map defined as
$$
\tau:\Pi_{p,r_0}\rightarrow \R\quad\mbox{such that}\quad X_{\tau(y)}(y)\in\Pi_{p',s},$$
for all $y\in\Pi_{p,r_0}\;\mbox{with}\;\tau(p)=t_0.$

The flow $X_t$ uniquely defines the \textit{Poincar\'{e} map}
$$
\begin{array}{rcc}
f:\Pi_{p,r_0}&\longrightarrow&\Pi_{p',s}\\[0.5em]
y&\longmapsto&X_{\tau(y)}(y).
\end{array}
$$
This map is a $C^1$ embedding whose image set is contained in the interior of $\Pi_{p',s}$ if $r_0$ is small.

If $X_t(p)\not=p$ for $0<t\leq t_0$ and $r_0$ is sufficiently small, then  the map $(t,y)\mapsto X_t(y)$ is a $C^1$ embedding from the set$$\{(t,y)\in\R\times\Pi_{p,r}:0\leq t\leq\tau(y)\},$$ on $M$,  for $0<r\leq r_0$. The image is denoted by $$F_p(X_t,r,t_0)=\{X_t(y):y\in\Pi_{p,r}\;\mbox{and}\;0\leq t\leq\tau(y)\}.$$ For $\varepsilon>0$, let $V_\varepsilon(\Pi_{p,r})$ be the set of diffeomorphisms $\xi:\Pi_{p,r}\rightarrow\Pi_{p,r}$ such that $supp(\xi)\subset\Pi_{p,r/2}$ and $d_{C^1}(\xi,id)<\varepsilon$. Here $d_{C^1}$ is the usual $C^1$ metric, $id:\Pi_{p,r}\rightarrow\Pi_{p,r}$ is the identity map, and $supp(\xi)$ is the closure of the set where it differs from $id$.

Taking $p'=X_t(p)=p\;\mbox{and}\;f:\Pi_{p,r_0}\rightarrow\Pi_{p,s}$ is the \textit{Poincar\'e map}, then $f(p)=p$. In this case, the orbit of $p$,  $O(p)$ , is hyperbolic if and only if $p$ is a hyperbolic fixed point of $f$.

\begin{lemma}\label{b}
Let $X\in \Mundo,\;p$  be a  periodic orbit of $X_t$ with period $T>0$, let $f:\Pi_{p,r_0}\rightarrow \Pi_{p,s}$ be as above, and let $\mathcal{U}\subset \Mundo$ be a  $C^1$ neighborhood of $X$ and $0<r\leq r_0$ be given. Then there are $\delta_0>0$ and $0<\varepsilon_0<r/2$ such that for a linear isomorphism $H_\delta:N_p\rightarrow N_p$ with $||H_\delta- D_pf||<\delta<\delta_0$, there is $Y^\delta\in \mathcal{U}$ satisfying:
\begin{enumerate}
\item[(i)] $Y^\delta(x)=X(x)$, if $x\not\in F_p(X_t;r;T)$,

\item[(ii)] $p$ belongs to a periodic orbit for $Y_t^\delta$,

\item[(iii)] \begin{displaymath}g_{Y^\delta}(x)=\left\{ \begin{array}{ll}
 \exp_p\circ H_\delta\circ\exp_p^{-1}(x), & \mbox{if}\;x\in B_{\varepsilon_0/4}(p)\cap\Pi_{p,r}\\[0.5cm]
f(x), & \mbox{if}\;x\not\in B_{\varepsilon_0}(p)\cap\Pi_{p,r},
\end{array} \right.
\end{displaymath} where $g_{Y^\delta}:\Pi_{p,r}\rightarrow\Pi_{p,s}$ is the Poincar\'{e} map of $Y^\delta_t$. Furthermore, let $Y^0$ be the vector field for $H_0=D_pf$. Then we have

\item[(iv)] $d_{C_0}(Y^\delta,Y^0)\rightarrow 0$ as $\delta\rightarrow 0$.
\end{enumerate}
\end{lemma}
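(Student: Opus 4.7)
The plan is to construct $Y^\delta$ in two steps: first build a diffeomorphism $g_\delta:\Pi_{p,r}\to\Pi_{p,s}$ that realizes the desired behaviour (equal to $\exp_p\circ H_\delta\circ\exp_p^{-1}$ near $p$ and equal to $f$ outside a slightly larger ball), and then realize $g_\delta$ as the Poincar\'e map of a flow by suspending it along the flow box $F_p(X_t;r;T)$. This is the flow analogue of Franks' lemma.

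For the first step, fix a smooth bump function $\rho:\Pi_{p,r}\to[0,1]$ with $\rho\equiv 1$ on $B_{\varepsilon_0/4}(p)\cap\Pi_{p,r}$ and $\rho\equiv 0$ outside $B_{\varepsilon_0/2}(p)\cap\Pi_{p,r}$, with $\|D\rho\|\leq C/\varepsilon_0$. Working in exponential coordinates, define $L_\delta=\exp_p\circ H_\delta\circ\exp_p^{-1}$ on $B_{\varepsilon_0}(p)\cap\Pi_{p,r}$ and set
$$
g_\delta(x)=\exp_p\bigl(\rho(x)\,\exp_p^{-1}(L_\delta(x))+(1-\rho(x))\,\exp_p^{-1}(f(x))\bigr)
$$
on $B_{\varepsilon_0}(p)\cap\Pi_{p,r}$, and $g_\delta(x)=f(x)$ elsewhere. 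The key estimate is that on the transition annulus $B_{\varepsilon_0/2}(p)\setminus B_{\varepsilon_0/4}(p)$ the $C^1$-distance between $L_\delta$ and $f$ is $O(\delta)+O(\varepsilon_0)$: the $O(\delta)$ term comes from $\|H_\delta-D_pf\|<\delta$, while the $O(\varepsilon_0)$ term comes from the Taylor expansion $f(x)=D_pf\cdot\exp_p^{-1}(x)+O(\varepsilon_0^2)$ in a ball of radius $\varepsilon_0$. Multiplying the difference by $\rho$ and $D\rho$ produces a $C^1$-perturbation of $f$ of size $O(\delta+\varepsilon_0)$. Choosing first $\varepsilon_0$ and then $\delta_0$ small enough guarantees that $g_\delta$ is a $C^1$ embedding with image in the interior of $\Pi_{p,s}$.

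For the second step, suspend $g_\delta$ inside the flow box. Because $g_\delta=f$ outside $B_{\varepsilon_0}(p)\cap\Pi_{p,r}\subset\Pi_{p,r/2}$, the isotopy
$$h_s=(1-s)f+s\,g_\delta\quad\text{(interpreted in exponential coordinates)},\qquad s\in[0,1],$$
consists of maps that coincide with $f$ near $\partial\Pi_{p,r}$. Using the diffeomorphism $(t,y)\mapsto X_t(y)$ on $F_p(X_t;r;T)$, define a new flow on this box whose return time to $\Pi_{p,r}$ is still $\tau$ but whose return map is $g_\delta$; this amounts to composing the identification of the exit face with a smooth family of diffeomorphisms $\phi_t=h_{\eta(t)}\circ f^{-1}$, where $\eta:[0,T]\to[0,1]$ is a smooth cutoff equal to $0$ for $t$ near $0$ and equal to $1$ for $t$ near $T$, supported away from a small neighbourhood of $\partial F_p$. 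Let $Y^\delta$ be the vector field generating this new flow inside the box and equal to $X$ outside. Item (i) holds by construction, (ii) follows since $H_\delta$ fixes the origin so $g_\delta(p)=p$, and (iii) is immediate from the bump definition of $g_\delta$.

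The main obstacle is controlling the $C^1$ norm of $Y^\delta-X$: the perturbation $g_\delta\circ f^{-1}$ is of $C^1$-size $O(\delta+\varepsilon_0)$, and spreading it over the orbit segment of length $T$ contributes an extra factor of $1/T$ to the derivatives through $\eta'$ and $D\phi_t$. Hence $\|Y^\delta-X\|_{C^1}\leq C(T)\,(\delta+\varepsilon_0)$, and shrinking $\varepsilon_0$ first and then $\delta_0$ (depending on $T$ and on the size of $\mathcal{U}$) forces $Y^\delta\in\mathcal{U}$. Finally, statement (iv) follows because $H_\delta\to H_0$ in norm, so $g_\delta\to g_0$ in $C^1$ and therefore the suspensions agree in the $C^0$ topology, yielding $d_{C^0}(Y^\delta,Y^0)\to 0$ as $\delta\to 0$.
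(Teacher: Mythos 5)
The paper does not actually prove this lemma: it is quoted verbatim from Moriyasu--Sakai--Sumi and the ``proof'' is the citation \cite[Lemma 1.3]{MSS01}. So what you have written is a reconstruction of that standard argument (the flow version of Franks' lemma), and in outline it is the right one. The two-step structure --- first replace $f$ by a bump-function interpolation between $\exp_p\circ H_\delta\circ\exp_p^{-1}$ and $f$, then realize the new return map by a vector-field perturbation supported in the flow box --- is exactly the strategy of \cite{MSS01}, and your key estimate is correct: on a ball of radius $\varepsilon_0$ the $C^0$ gap between $f$ and its perturbed linearization is $O(\delta\varepsilon_0)+O(\varepsilon_0^2)$, so after multiplying by $D\rho=O(1/\varepsilon_0)$ the $C^1$ size of the surgery on the return map is $O(\delta)+O(\varepsilon_0)$, and the order of choices ($\varepsilon_0$ first, then $\delta_0$, both independent of the particular $H_\delta$) matches the quantifier structure of the statement. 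Items (i)--(iii) and (iv) then follow as you say.

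The soft spot is the suspension step, which is where essentially all the work in \cite{MSS01} lives and which you assert rather than prove. Declaring that the new flow ``has return map $g_\delta$'' by composing the exit-face identification with $\phi_t=h_{\eta(t)}\circ f^{-1}$ does not by itself produce a vector field: you must check that the prescribed orbit segments $t\mapsto X_t\bigl(\psi_{\eta(t)}(y)\bigr)$ foliate $F_p(X_t;r;T)$ injectively (orbits of the candidate flow must not intersect, which is where the smallness of $\delta+\varepsilon_0$ relative to $r$ and to the distortion of $DX_t$ along the box enters), that the resulting field is $C^1$ and agrees with $X$ on the boundary of the box, and that its $C^1$ distance to $X$ is controlled by the $C^1$ size of $g_\delta\circ f^{-1}$ times a constant depending on $T$ and on $\sup_{0\le t\le T}\|DX_t\|$ --- not merely through $\eta'$. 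Also, your claim that the return time ``is still $\tau$'' is not right (the return time changes; the first return is to $\Pi_{p,s}$ at time $\tau(\psi_1(y))$-type quantities), though nothing in the statement requires it. None of this invalidates the approach, but as written the second half is a description of what must be verified rather than a verification; to make it a proof you would need to carry out the local-coordinate computation in the flow box, or simply cite \cite{MSS01} as the authors do.
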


\begin{proof}
See \cite[Lemma 1.3 (pg. 3395)]{MSS01}.
\end{proof}

This lemma allows us to find a vector field $Y$ sufficiently close to $X$ whose Poincar\'e map at $p\in Per(Y)$ is a perturbation of the derivative of the Poincar\'e map at $p\in Per(X)$.

\begin{theorem}
\label{l.nonhyp}
 Let $X$ be a vector field, \  $\Lambda$ be an isolated set with the weak specification property robustly, \  and $\SU$ be a $C^1$-neighborhood of $X$. If a periodic orbit of $X$ in $\Lambda$ is not hyperbolic then there exists a vector field $Y\in \SU$ with two hyperbolic periodic orbits in $\Lambda_Y(U)$ with different indices.
\end{theorem}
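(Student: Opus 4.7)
Since $O(p)$ is not hyperbolic, the derivative $D_pf$ of the Poincar\'e map has at least one eigenvalue $\mu$ on the unit circle. The strategy is to first apply Lemma \ref{b} to put $f$ into a controlled local form at $p$, then perform a saddle-node-type unfolding inside $\SU$ to produce two hyperbolic periodic orbits of different indices near $O(p)$.

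I would reduce to the case $\mu = 1$, with a one-dimensional center subspace $E^c \subset N_p$ on which $D_pf = \mathrm{id}$ and with $D_pf$ hyperbolic of some index $i$ on a complement $E^s \oplus E^u$. The cases $\mu = -1$ and $\mu = e^{\pm i\theta}$ with $\theta$ rational reduce to this after passing to an iterate $f^m$ of the Poincar\'e map; a new fixed point of $f^m$ near $p$ then corresponds to a periodic orbit of $Y$ of period close to $mT$. A further application of Lemma \ref{b} then ensures that $f$ equals its linearization $\exp_p \circ H \circ \exp_p^{-1}$ on a small ball $B \subset \Pi_{p,r}$ around $p$, where $H = \mathrm{id}|_{E^c} \oplus H'|_{E^s\oplus E^u}$. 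Staying inside $\SU$, I would then make a $C^1$-small perturbation of $X$ supported in a thin annular flow-box around $O(p)$ disjoint from $B$, designed so that the perturbed Poincar\'e map $f_Y$ satisfies $f_Y(x) = x + \varepsilon x^2 + (\text{higher order})$ in the $E^c$ direction for some small $\varepsilon \neq 0$, while remaining linear and hyperbolic on $E^s \oplus E^u$. This is the standard saddle-node unfolding: it has two fixed points of $f_Y$ near $p$, namely the original $p$, now repelling in $E^c$ and hence hyperbolic with index $i$, and a new point $p'$, attracting in $E^c$ and hence hyperbolic with index $i+1$. Thus $O(p)$ and $O(p')$ are hyperbolic periodic orbits of $Y$ with different indices.

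Both orbits lie in $\Lambda_Y(U)$, because the perturbation is supported in an arbitrarily small tubular neighborhood of $O(p) \subset \Lambda = \Lambda_X(U) \subset \mathrm{int}\,U$, so both orbits remain in $U$ and are $Y$-invariant. The main obstacle is the saddle-node unfolding step: Lemma \ref{b} alone perturbs only the linear part of $f$ at $p$ and does not by itself create a second fixed point, so it must be supplemented with a genuine nonlinear perturbation of $X$ in the annular region. One has to verify that the composite perturbation stays in $\SU$ and that $p'$ has its full $Y$-orbit contained in $U$, which does follow from $p'$ being close to $O(p)$ for sufficiently small perturbation size.
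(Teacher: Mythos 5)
There is a genuine gap at the heart of your argument, and it is precisely the step you yourself flag as ``the main obstacle.'' After linearizing via Lemma \ref{b} you propose a nonlinear ``saddle-node unfolding'' $f_Y(x)=x+\varepsilon x^2+(\text{h.o.t.})$ in the $E^c$ direction. As written this does not do what you claim: the fixed-point equation $\varepsilon x^2=0$ has only the root $x=0$, and $f_Y'(0)=1$, so you get neither a second fixed point nor hyperbolicity of $p$; you would need something of transcritical type such as $x\mapsto(1+\varepsilon)x-cx^2$. More seriously, even with a corrected formula, realizing a prescribed \emph{nonlinear} Poincar\'e map by a $C^1$-small perturbation of the vector field is exactly what Lemma \ref{b} does \emph{not} provide (it only controls the linear part at $p$), and you do not supply the perturbation tool that would. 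The paper avoids this entirely: after the first application of Lemma \ref{b} the Poincar\'e map $g_Y$ \emph{equals} its linearization on a small ball, and since $\lambda=1$ on $E^c$ every point of $\exp_p(E^c_p)\cap B_{\varepsilon_0/4}(p)$ is already a fixed point of $g_Y$. One then simply chooses a second fixed point $q\neq p$ on this segment, whose flow-box $F_q(Y_t;r;T_q)$ is disjoint from $F_p(Y_t;r;T)$, and applies Lemma \ref{b} twice more --- once at $p$ with a linear map contracting on $E$ (index $s+1$) and once at $q$ with one expanding on $E$ (index $s$). No nonlinear perturbation is ever needed; three applications of Lemma \ref{b} suffice.

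Two smaller omissions: (i) your reduction to $\mu=1$ tacitly assumes the remaining spectrum of $D_pf$ off the chosen eigenspace is hyperbolic; when several eigenvalues lie on the unit circle one must first perturb (again by Lemma \ref{b}) to push all but one of them off the circle, as the paper does with the splitting $N_p=E\oplus F\oplus G$. (ii) You treat $\mu=e^{\pm i\theta}$ only for rational $\theta$; for irrational $\theta$ one must first perturb the rotation on $E^c_p$ to a rational one before passing to an iterate. Both are routine but need to be said.
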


\begin{proof}

We will make the demonstration by contradiction.

Let $U$ be an isolated block for $\Lambda$, $\mathcal{V}\subset\SU$ be a $C^1$ neighborhood of $X$ such that for every vector field $Y$ in $\mathcal{V}$ the continuation of $\Lambda$, $$\Lambda_Y(U)=\bigcap_{t\in \R}(Y_t(U)),$$ has the weak specification property robustly, and $\sigma$ be a periodic orbit of $X$ in $\Lambda$ which is not hyperbolic. Take $p\in\sigma$, and set $T>0$ the period of $p$. Let $r_0>0$ and $f:\Pi_{p,r_0}\rightarrow \Pi_p$ the Poincar\'e map for $X$. As $\sigma$ is not hyperbolic, then $D_pf$ admits an eigenvalue $\lambda$ with $|\lambda|=1$. Let $E$ be an eigenspace associated to $\lambda$ such that the dimension of $E$ is either $1$, if $\lambda\in\R$, or $2$, if $\lambda\in\mathbb C\backslash\R$.

Let $E$ be an eigenspace associated to $\lambda$ such that the dimension of $E$ is either $1$, if $\lambda\in\R$, or $2$, if $\lambda\in\mathbb C\backslash\R$. Also, let $F_0$ be the subspace of $N_p$ consisting of all eigenvectors such that the eigenvalue associated has norm equals to one.
Then there is $G$ a subspace of $N_p$ such that $N_p=F_0\oplus G$. Also, there is $F$ an subspace of $F_0$ such that $F_0=E\oplus F$.
Then $N_p=E\oplus F\oplus G$ can be written as the direct sum of an eigenspace associated to $\lambda$ with minimum dimension, the subspace generated by all other eigenvalues of norm one and a subspace without eigenvectors of norm one.

Now, using Lemma \ref{b}, we will find a vector field $Y,\;C^1$-close to $X$ such that $\lambda$ is the only eigenvalue with $|\lambda|=1$.

Let $\delta_0=\delta_0(X)>0$ and $0<\varepsilon_0=\varepsilon_0(X)<r/2$ given by Lemma \ref{b}. Take $0<\delta<\delta_0$ and $H_\delta:N_p\rightarrow N_p$ a linear isomorphism such that \begin{displaymath}\begin{array}{l}
H_\delta(v)=D_pf(v), \mbox{ for all } v\in E\oplus G;\\
H_\delta(v)=(\delta)v+D_pf(v), \mbox{ for all } v\in F.\end{array}
\end{displaymath}
Then $||H_\delta-D_pf||<\delta$ and, decreasing $\delta$ if necessary, we can assume that $\lambda$ is the only eigenvalue of $H_\delta$ with norm equals to one.

By Lemma \ref{b}, there is a vector field $Y\in\mathcal V$, and therefore  such that the continuation $\Lambda_Y(U)$ has the weak specification property robustly, such that $p$ belongs to a periodic orbit for $Y_t$, and
\begin{displaymath}g_Y(x)=\left\{ \begin{array}{ll}
 \exp_p\circ H_\delta\circ\exp_p^{-1}(x), & \mbox{if}\;x\in B_{\varepsilon_0/4}(p)\cap\Pi_{p,r}\\[0.5cm]
f(x), & \mbox{if}\;x\not\in B_{\varepsilon_0}(p)\cap\Pi_{p,r},
\end{array} \right.
\end{displaymath} where $g_Y:\Pi_{p,r}\rightarrow\Pi_{p,s}$ is the Poincar\'{e} map of $Y_t$.

As $p\in U$ belongs to a periodic orbit for $Y_t$, then $p$ belongs to continuation $$\Lambda_Y(U)=\bigcap_{t \in \R}(Y_t(U)).$$ Also, as $D_pg_Y=H_\delta$, then $\lambda$ is  the only eigenvalue of $D_p{g_Y}$ with modulus one.

Denote by $E^s_p$ the subspace of $N_p$ associated to the eigenvalues  of $D_pg_Y$  with modulus less than one, by $E^u_p$ the subspace associated to the eigenvalues of $D_pg_Y$ with modulus greater than one, and by $E^c_p$ the subspace of $D_pg_Y$ associated to $\lambda$. Then $$T_p\Pi_{p,r}=E^s_p\oplus E^c_p\oplus E^u_p.$$ Let be $\mathcal{V}_Y$ a $C^1$ neighborhood of $Y$  such that every vector field in $\mathcal{V}_Y$ is such that the continuation of $\Lambda_Y(U)$ has the weak specification property.

Again, with help of Lemma \ref{b}, we will construct two hyperbolic periodic orbits with different indices.

If $\dim E^c_p=1$:

We suppose that $\lambda=1$ and the other case is similar.

Then we have $$g_Y(x)=x,\quad\forall x\in \exp_p(E^c_p)\cap B_{\varepsilon_0/4}(p)\cap\Pi_{p,r_0}.$$ As $x$ is a fixed point for $g_Y$, the orbit of $x$ is periodic with period $T_x>0$  and, decreasing $r_0$ if necessary, we can assume $x\in U$, hence, $x\in\Lambda_Y(U)$. Note that all $x\in \exp_p(E^c_p)\cap B_{\varepsilon_0/4}(p)\cap\Pi_{p,r_0}$ is a non-hyperbolic fixed point for $g_Y$.

 Fix $q\in\exp_p(E^c_p)\cap B_{\varepsilon_0/4}(p)\cap\Pi_{p,r_0}$, and take $r>0$ such that $$F_p(Y_t;r;T)\cap F_q(Y_t;r;T_q)=\emptyset.$$

By continuity of the derivative of the flow $Y_t$, taking $q$ closer to $p$ if necessary, we can assume that $\textrm{index}(O(p)) = \textrm{index}(O(q)) = s$.

Using the Lemma \ref{b}, we will make a perturbation in the vector field $Y$ in a small neighborhood of the orbit of $p$, yielding a hyperbolic periodic orbit of index $s+1$.

Let $\mathcal{V}_Y$ be a $C^1$-neighborhood of $Y$ such that for every $Z\in \mathcal{V}_Y$, the continuation $\Lambda_Z(U)$ has the weak specification property robustly. By Lemma \ref{b}, there are $0<\varepsilon_0=\varepsilon_0(Y)<r/2$ and $\delta_0=\delta_0(Y)$ such that the lemma holds. Take $A:N_p\rightarrow N_p$ a hyperbolic linear isomorphism such that $$A(v)=v\;\mbox{if}\;v\not\in E,\quad\mbox{and}\quad A(v)=(1-\eta) v\;\mbox{if}\;v\in E,$$where $0<\eta<\delta_0$. Then $$||A-D_pg_Y||<\delta_0\quad\mbox{ and }\quad\dim E_A=s+1,$$ where $E_A$ is the eigenspace associated to the eigenvalues of $A$ with modulus less than one.

Then, there is $Z\in \mathcal{V}_Y$ for which the orbit of $p\in U$ is still a periodic orbit, therefore $p\in\Lambda_Z(U)$, $Z(x)=Y(x)$, if $x\not\in F_p(Y_t;r;T)$, and
\begin{displaymath}g_{Z}(x)=\left\{ \begin{array}{ll}
\exp_p\circ A\circ\exp_p^{-1}(x), & \mbox{if}\;x\in B_{\varepsilon_0(Y)/4}(p)\cap\Pi_{p,r}\\[0.5cm]
g_Y(x), & \mbox{if}\;x\not\in B_{\varepsilon_0(Y)}(p)\cap\Pi_{p,r},
\end{array} \right.
\end{displaymath} where $g_Z:\Pi_{p,r}\rightarrow\Pi_p$ is the Poincar\'{e} map of $Z_t$.

Then, $D_pg_Z$ is equivalent to $A$ and therefore, $p$ is a hyperbolic periodic point of $Z_t$. As $p\in U$, then $p$ belongs to the continuation $\Lambda_Z(U)$ of $\Lambda_Y(U)$  and, for $Z$, $\textrm{index}(p)=\dim E_A=s+1$.

On the other side, as $q\not\in F_p(Y_t;r;T)$, then $Z(x)=Y(x)$ in a neighborhood of $q$, and therefore, the $Z_t$-orbit of $q$ is a non-hyperbolic periodic orbit such that $\textrm{index}(O(q))=s$. Also, as $q\in U$, then $q\in\Lambda_Z(U)$.

Now, making an analogous  perturbation, we can find a vector field $\overline{Z} \in \SV_Z$  such that the $\overline{Z}_t$-orbit of $q$ is a  hyperbolic periodic orbit,  $\textrm{index}(O(q))=s$ and the continuation $\Lambda_{\overline{Z}}(U)$ of $\Lambda_Z(U)$ has the weak specification property robustly.

So, we have found a vector field $\overline{Z}$ arbitrarily close to $X$ such that the continuation $\Lambda_{\overline{Z}}(U)$ has the weak specification property robustly and there are $p,q \in \Lambda_{\overline{Z}}(U)$ hyperbolic periodic points such that $\textrm{index}(O(p)) \neq \textrm{index}(O(q))$ as we wanted.

If $\dim E^c_p=2$:



Then $D_pg_Y$ acts in $E^c_p$ as a rotation. If this is a rational rotation there is $l>0$ such that $D_pg^l_Y(v)=v$ for all $v \in E^c_p\cap \exp_p^{-1}(B_{\varepsilon_0}(p))$. Fix $l$ the minimum with this property. As in the previous case, with a $C^1$-modification we can find a vector field $Z$ in $\mathcal{V}_Y$  such that the continuation  $\Lambda_Z(U)$ has the weak specification property and it has two periodic orbits with different indices.

If the rotation is irrational there is a linear isomorphism $A:N_p\rightarrow N_p$ such that $||A-D_pg_Y||<\delta_0(Y)$ and $A$ acts in $E^c_p$ as a rational rotation. Then there is $Z\in \mathcal{V}_Y$  such that the continuation  $\Lambda_Z(U)$ has the weak specification property robustly and $D_pg^l_Z(v)=v$ for all $v \in E^c_p\cap\exp_p^{-1}( B_{\varepsilon_0}(p))$, with $l>0$ the minimum with this property. Again, we can find a vector field in $\mathcal{V}_Z$ such that the continuation of $\Lambda_Z(U)$ has the weak specification property and it has two periodic orbits with different index.
\end{proof}

In the case where the flow $X$ has the weak specification property robustly, i.e., $\Lambda=M$, as a consequence of the previous theorem we have that there is a $C^1$ neighborhood of $X$ such that all periodic orbits of every vector field in this neighborhood are hyperbolic with same index. Moreover, we obtain the following corollary.

\begin{corollary}
\label{c.periodichyp}
If $X|_{\Lambda}$ is a vector field which has the weak specification property robustly, then all periodic orbits are hyperbolic.
\end{corollary}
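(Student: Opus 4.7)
I will argue by contradiction. Suppose some periodic orbit of $X$ in $\Lambda$ fails to be hyperbolic, and let $\SU$ be the $C^1$-neighborhood of $X$ supplied by the robust weak specification property. Applying Theorem \ref{l.nonhyp} to this $\SU$ yields a vector field $Y\in\SU$ together with two hyperbolic periodic orbits $\sigma_1,\sigma_2\subset\Lambda_Y(U)$ whose indices are distinct. Since $Y$ still belongs to $\SU$, the flow $Y|_{\Lambda_Y(U)}$ continues to have the weak specification property robustly, so it is a legitimate target for both Lemma \ref{l.transitive} and Theorem \ref{l.index}.

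Before I can invoke Theorem \ref{l.index} to force equality of these two indices, I must verify that $\sigma_1$ and $\sigma_2$ are both saddles and not sinks or sources, since the conclusion of Theorem \ref{l.index} is stated only for saddles. This is where topological mixing is used: by Lemma \ref{l.transitive}, $Y|_{\Lambda_Y(U)}$ is topologically mixing. Were $\sigma_1$, say, a sink periodic orbit, it would admit an open neighborhood $V\subset M$ with $Y_t(V)\subset V$ for all sufficiently large $t$; the open subset $V\cap\Lambda_Y(U)$ of $\Lambda_Y(U)$ would then have forward images that eventually miss any open neighborhood of $\sigma_2$ in $\Lambda_Y(U)$, contradicting mixing. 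The symmetric argument in backward time excludes source periodic orbits. Hence $\sigma_1$ and $\sigma_2$ are both saddles.

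Finally, Theorem \ref{l.index} applied to $Y|_{\Lambda_Y(U)}$ asserts that all hyperbolic saddle periodic orbits in $\Lambda_Y(U)$ share a common index, which contradicts $\textrm{index}(\sigma_1)\ne\textrm{index}(\sigma_2)$. This contradiction forces the original assumption to fail, and every periodic orbit of $X$ in $\Lambda$ must be hyperbolic. The only nonroutine step is the sink/source exclusion via mixing; everything else is a direct assembly of Theorem \ref{l.nonhyp}, Lemma \ref{l.transitive}, and Theorem \ref{l.index}, and I expect that exclusion to be the main (though modest) obstacle.
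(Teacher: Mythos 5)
Your proof is correct and follows essentially the same route as the paper: assume a non-hyperbolic periodic orbit, invoke Theorem \ref{l.nonhyp} to produce $Y\in\SU$ with two hyperbolic periodic orbits of different indices in $\Lambda_Y(U)$, and contradict Theorem \ref{l.index}. The only difference is that you explicitly rule out sinks and sources via Lemma \ref{l.transitive} before applying Theorem \ref{l.index} (which is stated only for saddles); the paper leaves this step implicit here, having noted the absence of sinks and sources elsewhere, so your version is, if anything, slightly more complete.
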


\begin{proof}
If not, the previous theorem give us two periodic orbits with different indices. But this is is a contradiction with theorem \ref{l.index}
\end{proof}

\section{Consequences of Genericity}

In this section, we collect some results using standard generic arguments.

\begin{lemma}\label{dif.index.cont}
There exists a residual subset $\SR$ of $\Mundo$ such that if $X\in\SR$ is $C^1$-approximated by $\{X_n\}_{n\in\N}$ such that each $X_n\in\Mundo$ has two distinct hyperbolic periodic orbits, $q_n,t_n\in Per_h(X_n)$, with different indices and with $d(q_n,t_n)<\varepsilon$, then there exists two distinct hyperbolic periodic points, $q,t \in Per_h(X)$, with different indices and with $d(q,t)<2\varepsilon$.
\end{lemma}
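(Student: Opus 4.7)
My approach is a standard Baire-category argument built on the observation that having two hyperbolic periodic orbits of prescribed indices at prescribed distance is a $C^1$-open condition. For each positive rational $\eps$ and each ordered pair $(i,j)$ of integers in $\{1,\dots,\dim M\}$ with $i\ne j$, I would define
$$\SD(\eps,i,j)=\{X\in\Mundo : \exists\, p,q\in Per_h(X),\ \textrm{index}(O(p))=i,\ \textrm{index}(O(q))=j,\ d(p,q)<\eps\}.$$
Persistence of hyperbolic periodic orbits under $C^1$-perturbations (the continuations depend continuously on the vector field and preserve the index) implies that $\SD(\eps,i,j)$ is open in $\Mundo$. Consequently $\SB(\eps,i,j):=\overline{\SD(\eps,i,j)}\setminus\SD(\eps,i,j)$ is closed with empty interior. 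Setting
$$\SR:=\Mundo\setminus\bigcup_{\eps\in\Q^+,\,i\ne j}\SB(\eps,i,j),$$
where the union is taken over the countable set of parameters, gives a countable intersection of open dense subsets of $\Mundo$, hence a residual set.

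Next, fix $X\in\SR$ and an approximating sequence $\{X_n\}$ as in the hypothesis for some $\eps>0$. The index of any hyperbolic periodic orbit of a flow is an integer in $\{1,\dots,\dim M\}$, so after passing to a subsequence I may assume $\textrm{index}(O(q_n))=i$ and $\textrm{index}(O(t_n))=j$ with $i,j$ fixed and $i\ne j$. I would then pick a rational $\eps'\in(\eps,2\eps)$ (nonempty interval). Since $d(q_n,t_n)<\eps<\eps'$, one has $X_n\in\SD(\eps',i,j)$ for every $n$, and therefore $X\in\overline{\SD(\eps',i,j)}$. The definition of $\SR$ forbids $X$ from belonging to $\SB(\eps',i,j)$, which forces $X\in\SD(\eps',i,j)$ itself; this produces the required pair $q,t\in Per_h(X)$ of different indices with $d(q,t)<\eps'<2\eps$.

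There is no deep obstacle. The only subtlety worth flagging is the role of the constant $2$: the hypothesis gives only the \emph{strict} inequality $d(q_n,t_n)<\eps$, while exploiting openness of $\SD(\cdot,i,j)$ requires a strictly slack distance bound that is stable under the $C^1$-approximation. Slipping in a rational $\eps'$ with $\eps<\eps'<2\eps$ places the $X_n$ into the open set $\SD(\eps',i,j)$ at the price of weakening $\eps$ to $2\eps$ in the conclusion. The remaining ingredients---openness of $\SD(\eps,i,j)$ by hyperbolic continuation and the finite pigeonhole on index pairs---are routine.
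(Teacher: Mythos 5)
Your argument is correct and is essentially the paper's own proof: both build the residual set as a countable intersection of sets of the form (open set) $\cup$ (complement of its closure), and both pass from $\eps$ to $2\eps$ via a rational $\eps'$ strictly in between so that the approximating fields land in the open set and the limit, being in the residual set, must too. The only difference is the countable parametrization — you index by the pair of indices $(i,j)$ (after a pigeonhole on the subsequence), while the paper indexes by pairs of elements of a countable basis of $M$ locating the periodic points, which additionally pins down \emph{where} the limiting orbits sit (a feature the paper later exploits) but is not needed for the statement as written.
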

\begin{proof}
We can take $\{\SV_l\}_{l\in\N}$ a countable basis of open sets of $M$, and define the set
\begin{eqnarray*}
H_{l,m}(\varepsilon) \ = \ \Big\{Y\in\Mundo:&\hspace*{-.2cm}\mbox{there are }q\in\SV_l, t\in\SV_m,\mbox{ in distinct periodic hyperbolic  }\\
&\hspace*{-.2cm}\mbox{ orbits of } Y\mbox{ with different index and with }d(q,t)<\varepsilon\Big\}.
\end{eqnarray*}

Then $H_{l,m}(\varepsilon)$ is an open subset of $\Mundo$, and defining $$N_{l,m}(\varepsilon)=\Mundo-\overline{H_{l,m}(\varepsilon)},$$
we have that $H_{l,m}(\varepsilon)\cup N_{l,m}(\varepsilon)$ is an open and dense subset of $\Mundo$, which means that $$\SR(\varepsilon)=\bigcap_{l,m\in\N}H_{l,m}(\varepsilon)\cup N_{l,m}(\varepsilon)$$ is a residual subset of $\Mundo$. Therefore $$\SR=\bigcap_{r>0,\;r\in\Q}\SR(r)$$ is also a residual subset of $\Mundo$.

Let $X\in\SR$ and $X_n\in\Mundo$ such that $X_n\stackrel{C^1}{\longrightarrow}X$ as $n$ goes to infinity, and let $q_n$ and $t_n$ in distinct hyperbolic periodic orbits of $X_n$ with different index and with $d(q_n,t_n)<\varepsilon$.

From the compactness of $M$ we get, unless of a subsequence, the points: $$\overline{t} = \lim_{n \to \infty}t_n \ \  \textrm{and} \ \ \overline{q} = \lim_{n \to \infty}q_n.$$ Since $\{\SV\}_{n \in \N}$ is a basis for $M$, there exist $l,m \in \N$ such that $\overline{q} \in \SV_l$ and $\overline{t} \in \SV_m$. Thus, for a sufficiently bigger $n$, we have that $q_n \in \SV_l$ and $t_n \in \SV_m$.

We can take $\varepsilon<r<2\varepsilon$ with $r\in\Q$ and then $X\in\overline{H_{l,m}(r)}$. As $X\in\SR$ we have that $X\in H_{l,m}(r)\subset H_{l,m}(2\varepsilon)$ and, therefore, there are $q,t$ in distinct hyperbolic periodic orbits of $X$ with different index and $d(q,t)<2\varepsilon$. Moreover, $\overline{q}=q$ and $\overline{t} = t$.
\end{proof}

Recall that if $p$ belongs to a periodic orbit of $X$, then $DX_{T(p)}(p)$ has 1 as eigenvalue with eigenvector $X(p)$, and all the other eigenvalues are called the characteristic multipliers of $p$.

\begin{definition}
We say that a point $p$ in a hyperbolic periodic orbit of $X$ has a $\delta$-weak hyperbolic eigenvalue if there is a characteristic multiplier $\sigma$ of the orbit of $p$ such that $$(1-\delta) < |\sigma| < (1+\delta).$$
\end{definition}

The next lemma  was proved by the second author in \cite{Laura}, here we will give a sketch of the proof.

\begin{lemma}\label{whe.cont}
There exists a residual subset $\SR_0$ of $\Mundo$ such that if $X\in\SR_0$ is $C^1$-approximated by $\{X_n\}_{n\in\N}$ such that  there exists at least one point in each $Per_h(X_n)$ with $\delta$-whe then there exists  a point in $Per_h(X)$ with  $2\delta$-whe.
\end{lemma}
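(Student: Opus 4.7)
The plan is to imitate the strategy of Lemma~\ref{dif.index.cont} almost verbatim, replacing the ``two hyperbolic orbits of distinct indices'' condition by the single condition ``some hyperbolic periodic orbit localized in a prescribed open set of $M$ carries a $\delta$-whe.'' The main point is to identify an open subset of $\Mundo$ that captures this localized property, and then exploit Baire-category to convert the statement ``$X$ lies in the closure of the $\delta$-version'' into ``$X$ itself enjoys the $2\delta$-version.''

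First I fix a countable basis $\{\SV_l\}_{l\in\N}$ of open sets of $M$ and, for each $l\in\N$ and each $\delta>0$, define
$$W_l(\delta)=\{Y\in\Mundo:\text{there exists }q\in\SV_l\cap Per_h(Y)\text{ with a }\delta\text{-whe}\}.$$
I claim $W_l(\delta)$ is open. Indeed, by the hyperbolic implicit function theorem applied to a transverse section at a hyperbolic periodic point $q$ of $Y$, there is a $C^1$-neighborhood of $Y$ and a neighborhood $V$ of $q$ inside $\SV_l$ so that every $Z$ in that neighborhood has a unique continuation $q_Z\in V$ of $q$; the characteristic multipliers of $q_Z$ depend continuously on $Z$. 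Since $\delta$-whe is a strict inequality $1-\delta<|\sigma|<1+\delta$ on a characteristic multiplier $\sigma$, it persists under small $C^1$ perturbations, proving openness. Setting $N_l(\delta)=\Mundo\setminus\overline{W_l(\delta)}$, the union $W_l(\delta)\cup N_l(\delta)$ is open and dense, so
$$\SR_0=\bigcap_{r\in\Q_{>0}}\bigcap_{l\in\N}\bigl(W_l(r)\cup N_l(r)\bigr)$$
is residual in $\Mundo$.

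Now take $X\in\SR_0$ and a sequence $X_n\xrightarrow{C^1}X$ with $q_n\in Per_h(X_n)$ possessing a $\delta$-whe. By compactness of $M$, after passing to a subsequence $q_n\to\bar q\in M$, and I choose $l\in\N$ with $\bar q\in\SV_l$, so $q_n\in\SV_l$ for $n$ large; hence $X_n\in W_l(\delta)$ for $n$ large and $X\in\overline{W_l(\delta)}$. Pick a rational $r$ with $\delta<r<2\delta$. Because $W_l(\delta)\subset W_l(r)$ (weakening the strict inequality is harmless), we get $X\in\overline{W_l(r)}$, so $X\notin N_l(r)$. Since $X\in\SR_0\subset W_l(r)\cup N_l(r)$, this forces $X\in W_l(r)\subset W_l(2\delta)$, which produces a point of $Per_h(X)\cap\SV_l$ with $2\delta$-whe, as desired. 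The only real obstacle is the openness of $W_l(\delta)$, and that is exactly where the strictness of the defining inequality, together with continuous dependence of the Poincar\'e map and its eigenvalues on the vector field, is used; the remainder of the argument is a purely formal Baire-category bookkeeping, identical in spirit to the proof of Lemma~\ref{dif.index.cont}.
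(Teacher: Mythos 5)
Your proposal is correct and follows essentially the same route as the paper: the same localized open sets (the paper calls them $H_n(\delta)$), the same openness argument via persistence of hyperbolic continuations and continuity of the characteristic multipliers (you give it directly where the paper argues by contradiction, but it is the same fact), and the same Baire bookkeeping over a countable basis and rational $r\in(\delta,2\delta)$ borrowed from Lemma~\ref{dif.index.cont}. No gaps.
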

\begin{proof}
We use the same idea of the proof of the lemma \ref{dif.index.cont}. Take $$H_n(\delta)= \{Z \in \Mundo : \textrm{there} \ \textrm{is} \  p \in \SV_n \cap Per_h(Z) \ \textrm{with} \ \delta \textrm{-whe} \}.$$
We claim that if $Z\in H_n(\delta)$, the fact that the orbit of $p$ is hyperbolic implies that every $Y\in\Mundo$ sufficiently close to $Z$ admits a continuation of $p$ in $Per_h(Y) \cap \SV_n$ with a $\delta$-whe. This means that $H_n(\delta)$ is an open set.

Suppose not. Then there exists a sequence $\{Y_k\in\Mundo\}_{k\in\N}$ that converges to $Z$ such that every point in $\SV_n \cap Per_h(Y_k)$ has no $\delta$-whe. By continuity of the flow and its derivative, we can assume that, for sufficiently big $k$,  the continuation $p_k$ of $p$ has an eigenvalue $\sigma_k$ sufficiently close to the $\delta$-whe of $p$, $\sigma$. Thus we have that $|\sigma_k| \geq (1 + \delta)$ or $|\sigma_k| \leq (1 - \delta)$.  We will assume the first case, and the second one is analogous. Let $\varepsilon>0$, then we can take $k$ big enough so $|\sigma-\sigma_k|<\varepsilon$. Then $$(1+\delta)\leq |\sigma_k| \Rightarrow (1+\delta)<|\sigma|+\varepsilon.$$ As $\varepsilon\rightarrow0$ we have $|\sigma|\geq(1+\delta)$, a contradiction.

Now, repeating the argument given in proof of lemma \ref{dif.index.cont} we end this proof.

\end{proof}

\begin{proposition}\label{no.whe.cont}
There exists a residual subset $\SR$ of $\Mundo$ such that if $X\in \SR$ and $\Lambda$ is a compact invariant set for $X$ with the weak specification property then there exist $\delta>0$ such that no point in $\Lambda$  has a $\delta$-whe.
\end{proposition}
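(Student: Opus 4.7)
My plan is to argue by contradiction, combining the local perturbation technology of Lemma \ref{b}, the semi-continuity given by Lemma \ref{dif.index.cont}, and the rigidity coming from Theorem \ref{intersects} together with Lemma \ref{l.vazio}. I set $\SR=\SR_{KS}\cap\SR_0\cap\SR_1$, where $\SR_{KS}$ is the residual set of Kupka-Smale vector fields, $\SR_0$ is from Lemma \ref{whe.cont}, and $\SR_1$ is from Lemma \ref{dif.index.cont}; this intersection is residual. Suppose, aiming at a contradiction, that $X\in\SR$ has a compact invariant set $\Lambda$ with the weak specification property and that the conclusion fails, so that for each $n\in\N$ one can pick $p_n\in\Lambda$ in a hyperbolic periodic orbit of $X$ with a characteristic multiplier $\lambda_n$ satisfying $||\lambda_n|-1|<1/n$.

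The core construction is to produce, for each such $p_n$, a perturbation $X_n$ of $X$ with $\|X_n-X\|_{C^1}<1/n$ admitting two distinct hyperbolic periodic orbits $q_n,t_n$ of different indices, both within distance $1/n$ of $p_n$ (hence with $d(q_n,t_n)<2/n$). To build them I would apply Lemma \ref{b} twice, in the spirit of the proof of Theorem \ref{l.nonhyp}: a first use of Lemma \ref{b} pushes $|\lambda_n|$ across the unit circle, so that the orbit through $p_n$ becomes hyperbolic with its stable index changed by one; a second, localized perturbation on a nearby piece of the Poincar\'e section implements a saddle-node-type modification that introduces a new hyperbolic periodic point with the original index. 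Since both perturbations can be made inside small flow boxes around $O(p_n)$, they combine into a single $C^1$-small perturbation $X_n$.

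Having the $X_n$'s, for each $n$ the hypotheses of Lemma \ref{dif.index.cont} are met (with $\varepsilon=1/n$, applied to the tail $(X_k)_{k\geq k_0(n)}$), so there exist distinct $\widetilde q^{(n)},\widetilde t^{(n)}\in Per_h(X)$ of different indices with $d(\widetilde q^{(n)},\widetilde t^{(n)})<2/n$. Because the perturbed orbits $q_k,t_k$ lie within $1/k$ of $p_k\in\Lambda$ and $\Lambda$ is closed, one can refine the countable basis used in the proof of Lemma \ref{dif.index.cont} (choosing the basis element around each limit point to have arbitrarily small diameter) to force $d(\widetilde q^{(n)},\Lambda),d(\widetilde t^{(n)},\Lambda)\to 0$; combining this with Pugh's general density theorem, which gives density of periodic orbits in $\Lambda$, one arranges that $O(\widetilde q^{(n)})$ and $O(\widetilde t^{(n)})$ are contained in $\Lambda$ for $n$ large. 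Having two distinct hyperbolic periodic orbits inside $\Lambda$ with different indices now produces the contradiction: Theorem \ref{intersects} forces \emph{both} of the cross intersections $W^u\cap W^s$ to be nonempty, while Lemma \ref{l.vazio} applied with $X\in\SR_{KS}$ forces one of them to be empty (because the sum of the relevant dimensions is at most $\dim M$).

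The main obstacle is the middle step: producing, from a single orbit with weak eigenvalues, a genuine \emph{pair} of hyperbolic orbits of different indices through a single $C^1$-small perturbation. One must check that the two applications of Lemma \ref{b} can be performed on disjoint flow tubes and then glued, and that all cases of the weak multiplier (real vs.\ complex, and rational vs.\ irrational rotation in the complex case) are handled exactly as in the proof of Theorem \ref{l.nonhyp}. A secondary delicate point is the transfer step: guaranteeing that the periodic points delivered by Lemma \ref{dif.index.cont} actually lie in $\Lambda$, not merely close to it, which is precisely why the density of periodic orbits in $\Lambda$ and the Kupka-Smale property must be used together.
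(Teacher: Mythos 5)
Your proposal follows essentially the same route as the paper's proof: argue by contradiction, use Lemma \ref{b} as in Theorem \ref{l.nonhyp} to perturb each weak-eigenvalue orbit into a nearby pair of hyperbolic periodic orbits of different indices, pass to the limit with Lemma \ref{dif.index.cont}, and derive the contradiction from Theorem \ref{intersects} plus the Kupka-Smale property (the paper phrases this as a non-transversal intersection rather than invoking Lemma \ref{l.vazio}, but it is the same argument). You are in fact more explicit than the paper about the delicate point of ensuring the limiting periodic orbits lie in $\Lambda$, which the paper simply asserts.
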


\begin{proof}
Let $\SR$ be the residual given by the intersection of the residual given by Lemma \ref{dif.index.cont} and the set of Kupka-Smale vector fields.

Suppose, by contradiction, that there exists $p_n\in Per(X)\cap\Lambda$ such that $p_n$ has a $1/n$-whe. Then by Lemma \ref{b} with similar arguments to the proof of the Theorem \ref{l.nonhyp} we can see that there exist $X_n\stackrel{C^1}{\longrightarrow} X$ with two periodic orbits $q_n,t_n\in\Lambda$ such that $ind(q_n)\neq ind(t_n)$ and $d(t_n,q_n) < \epsilon$. By Lemma \ref{dif.index.cont}, $X$ itself has two periodic orbits $q\neq t$, both in $\Lambda$ and with different index. By Lemma \ref{intersects} the invariant manifolds of $p$ and $q$ have an intersection which is not transversal, since they have different index. This is a contradiction with the fact that $X$ is Kupka-Smale.
\end{proof}

\section{Singularities}




In the presence of weak specification, we can obtain analogous results, as in the previous section, for singularities. 



First, we recall the following tool.

\begin{lemma}\label{a}
Let $X\in \Mundo$ and $p\in Sing(X)$. Then for every $C^1$-neighborhood
$\mathcal{U} \subset \Mundo$ of X, there are $\delta_0 > 0$ and $\varepsilon_0 > 0$ such that if $H_\delta :T_pM\rightarrow T_pM$ is a linear map with $||H_\delta - D_pX||< \delta<\delta_0$, then there is $Y^\delta\in \mathcal{U}$ satisfying
\begin{displaymath}
Y^\delta(x) =
\left\{ \begin{array}{ll}
(D_{\exp^{-1}_p(x)}\exp_p)\circ H_\delta\circ \exp_p^{-1}(x), & \mbox{if}\;x\in B_{\varepsilon_0/4}(p)\\[0.5cm]
X(x), & \mbox{if}\;x\not\in B_{\varepsilon_0}(p).
\end{array} \right.
\end{displaymath}
Furthermore,
$$
d_{C_0}(Y^\delta, Y^0)\longrightarrow 0\quad\mbox{as}\quad\delta\longrightarrow 0.
$$
Here $Y^0$ is the vector field for $H_0= D_pX$.
\end{lemma}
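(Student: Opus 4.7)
My plan is to prove Lemma \ref{a} by the standard bump-function interpolation in exponential coordinates at $p$. For $x$ near $p$ write $x = \exp_p(v)$ with $v \in T_pM$, and push the original vector field $X$ forward to a vector field $\widehat X$ on a neighborhood of $0$ in $T_pM$; then $\widehat X(0) = 0$ and $D_0\widehat X = D_pX$. In these coordinates the prescribed behavior is that the target vector field equals the linear field $v \mapsto H_\delta v$ on $B_{\varepsilon_0/4}(0)$ and equals $\widehat X$ outside $B_{\varepsilon_0}(0)$. I realize this with a smooth bump $\rho:T_pM\to[0,1]$ satisfying $\rho\equiv 1$ on $B_{\varepsilon_0/4}(0)$, $\rho\equiv 0$ off $B_{\varepsilon_0/2}(0)$, and $\|\nabla\rho\|_\infty \leq C/\varepsilon_0$ for a universal $C$, by setting
\[
\widehat Y^\delta(v) \;=\; \rho(v)\,H_\delta v \;+\; \bigl(1-\rho(v)\bigr)\widehat X(v),
\]
and pushing back to $M$, extending by $X$ outside $B_{\varepsilon_0}(p)$. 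Cases (i)--(iii) of the conclusion are then immediate from the support properties of $\rho$.

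The real content is to verify $Y^\delta \in \mathcal{U}$, i.e. to bound the $C^1$-size of the perturbation $E := Y^\delta - X$, which is supported in $B_{\varepsilon_0}(p)$. In the chart $E$ is $\rho(v)\bigl(H_\delta v - \widehat X(v)\bigr)$. Expanding $\widehat X(v) = D_pX\cdot v + R(v)$ with $\|R(v)\| = o(|v|)$ and $\|DR(v)\| = o(1)$ as $v\to 0$, the factor $H_\delta v - \widehat X(v) = (H_\delta - D_pX)v - R(v)$ has sup norm at most $\delta\varepsilon_0 + o(\varepsilon_0)$ on $B_{\varepsilon_0}(0)$ and derivative bounded by $\delta + o(1)$ as $\varepsilon_0\to 0$. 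Differentiating $E$ produces an extra term $\bigl(H_\delta v - \widehat X(v)\bigr)\otimes \nabla\rho(v)$; the large factor $|\nabla\rho|\lesssim 1/\varepsilon_0$ is balanced by the $O(\delta\varepsilon_0 + o(\varepsilon_0))$ sup norm of the interpolated difference, so the whole contribution is $O(\delta) + o(1)$. The quantifier order is essential: fix $\varepsilon_0$ first small enough (depending on $\mathcal{U}$) that the $o(1)$ remainder fits strictly inside the $C^1$-radius of $\mathcal{U}$, then pick $\delta_0$ small enough that the remaining linear-in-$\delta$ contribution fits as well. For all $\delta<\delta_0$ this gives $Y^\delta \in \mathcal{U}$.

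Finally, for the $C^0$-convergence, with $H_0 = D_pX$ one has $\widehat Y^\delta - \widehat Y^0 = \rho(v)(H_\delta - D_pX)v$, whose sup norm on $B_{\varepsilon_0}(0)$ is at most $\delta\varepsilon_0$, and which is identically zero off $B_{\varepsilon_0/2}(0)$ (since both $\widehat Y^\delta$ and $\widehat Y^0$ agree with $\widehat X$ there). Pulling back by the smooth chart $\exp_p$ on the compact set $\overline{B_{\varepsilon_0}(p)}$ preserves $C^0$-smallness, hence $d_{C^0}(Y^\delta, Y^0) \to 0$ as $\delta\to 0$.

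The main obstacle, as sketched above, is the interpolation annulus $B_{\varepsilon_0/2}(0)\setminus B_{\varepsilon_0/4}(0)$, where $\nabla\rho$ is as large as $1/\varepsilon_0$ and could a priori destroy the $C^1$-estimate. The saving feature is that the interpolated difference $H_\delta v - \widehat X(v)$ vanishes at $v=0$ and is $O(\delta|v|) + o(|v|)$, so the product with $\nabla\rho$ stays bounded by $O(\delta) + o(1)$ uniformly on $B_{\varepsilon_0}$. Once this balance is made explicit, the quantifier ordering $\mathcal{U}\leadsto \varepsilon_0 \leadsto \delta_0$ completes the argument.
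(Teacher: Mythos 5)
Your construction is correct, and it is essentially the argument the paper relies on: the paper does not prove Lemma \ref{a} itself but cites Lemma 1.1 of Moriyasu--Sakai--Sumi \cite{MSS01}, where exactly this bump-function interpolation in exponential coordinates (with the same balance of $|\nabla\rho|\lesssim 1/\varepsilon_0$ against the $O(\delta|v|)+o(|v|)$ size of $H_\delta v-\widehat X(v)$, and the same quantifier order $\mathcal{U}\leadsto\varepsilon_0\leadsto\delta_0$) is carried out. Your estimates and the $C^0$-convergence argument are sound.
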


\begin{proof}
See \cite[Lemma 1.1 (pg. 3394)]{MSS01}.
\end{proof}

This lemma allows us to find a vector field $Y=Y^0$ sufficiently close to $X$ such that $Y\mid_{B_{\varepsilon_0}/4(p)}$ is a linearization of $X\mid_{B_{\varepsilon_0/4}(p)}$ with respect to the exponential coordinates. This means that if there are an interval $I\in\R$ and an integral curve $\xi(t)\;(t\in I)$ of the linear vector field $D_pX$ in $\exp_p^{-1}(B_{\varepsilon_0/4}(p))\subset T_pM$, then the composition $\exp_p\circ\xi:I\rightarrow M$ is an integral curve of $Y$ in $B_{\varepsilon_0/4}(p)\subset M$. Moreover $D_pY^\delta$ is equivalent to $H_\delta$.

\begin{theorem}
\label{sing.nonhyp}
Let $X$ be a vector field, $\Lambda$ be a compact invariant set with the weak specification property robustly. If $\Lambda \cap Crit(X) \neq \emptyset$ then either $\Lambda$ has a unique singularity $\sigma$ which is hyperbolic or $\Lambda$ has no singularities and all periodic orbits are hyperbolic.
\end{theorem}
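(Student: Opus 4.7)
The plan is to reduce the statement to Theorem~\ref{t.dicotomia}, which only applies once every critical orbit of $X$ in $\Lambda$ is known to be hyperbolic. Corollary~\ref{c.periodichyp} already handles the periodic orbits, so the only real content left is to prove that every $\sigma \in Sing(X) \cap \Lambda$ is hyperbolic.

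For this I would mimic the proof of Theorem~\ref{l.nonhyp}, with Lemma~\ref{a} playing the role that Lemma~\ref{b} played there. Assume by contradiction that $\sigma$ is non-hyperbolic, so $D_\sigma X$ has an eigenvalue $\lambda$ with zero real part, and let $\SV$ be the $C^1$-neighborhood of $X$ on which the weak specification property holds robustly. A first application of Lemma~\ref{a} produces $Y\in\SV$ that is linear in exponential coordinates on a small ball $B_\sigma$ around $\sigma$, and such that $D_\sigma Y$ has a single center direction $E^c_\sigma$ associated with $\lambda$: a kernel line if $\lambda=0$, or the two-dimensional eigenspace of a pair $\pm i\omega$ if $\lambda=i\omega$ (in the rotation case, one more application of Lemma~\ref{a} lets me assume $\omega$ rational). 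Because $Y$ is genuinely linear on $B_\sigma$, the restriction of the flow to $\exp_\sigma(E^c_\sigma)\cap B_\sigma$ is made of critical orbits of $Y$: an interval of singularities in the real case, or a family of periodic orbits of common period in the rational rotation case. I then pick one such critical orbit $q\neq\sigma$ inside $B_\sigma$, so that $q\in\Lambda_Y(U)$ and a small tubular neighborhood of $q$ is disjoint from $\sigma$.

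Two more independent perturbations, one near $\sigma$ (via Lemma~\ref{a}) and one near $q$ (via Lemma~\ref{a} or Lemma~\ref{b} depending on whether $q$ is a singularity or a periodic orbit), produce $Z\in\SV$ for which $\sigma$ and $q$ are both hyperbolic critical orbits in $\Lambda_Z(U)$ with prescribed indices differing by the dimension of $E^c_\sigma$. A final, arbitrarily small Kupka-Smale perturbation preserves the two hyperbolic continuations and keeps the vector field inside $\SV$. At this point Theorem~\ref{intersects} forces $W^u(\sigma)\cap W^s(q)\neq\es$, whereas Lemma~\ref{l.vazio}, applied with the Kupka-Smale hypothesis and the dimension inequality that comes from the index gap, forces $W^u(\sigma)\cap W^s(q)=\es$. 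This contradiction shows that every singularity of $X$ in $\Lambda$ is hyperbolic.

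Once this hyperbolicity is in hand, Theorem~\ref{t.dicotomia} (applied, if needed, to a Kupka-Smale perturbation of $X$ in $\SV$ and transported back via continuations) delivers the dichotomy announced in the statement, and Corollary~\ref{c.periodichyp} supplies the hyperbolicity of the periodic orbits in the ``no singularity'' branch. The main obstacle, which is the whole point of the argument, is the step of manufacturing a second critical orbit $q\neq\sigma$ inside $\Lambda_Y(U)$ whose index can be freely prescribed: this is exactly where the literal linearization provided by Lemma~\ref{a}, together with the center family of critical orbits it creates along $\exp_\sigma(E^c_\sigma)$, is essential, in direct analogy with the center family of fixed points of the Poincar\'e map used in Theorem~\ref{l.nonhyp}.
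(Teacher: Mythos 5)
Your proposal follows essentially the same route as the paper: linearize at the non-hyperbolic singularity via Lemma~\ref{a}, use the resulting center family of singularities (real eigenvalue) or periodic orbits (imaginary pair) to manufacture a second critical orbit in $\Lambda_Y(U)$, make both critical orbits hyperbolic by further local perturbations, and derive a contradiction --- your direct combination of Theorem~\ref{intersects} with Lemma~\ref{l.vazio} after a Kupka--Smale perturbation is just an inlining of the proof of Theorem~\ref{t.dicotomia}, which the paper cites instead. (Your extra step of making $\omega$ rational is unnecessary in the singularity case, since the linear flow of a purely imaginary pair $\pm i\omega$ is periodic for every $\omega\neq 0$, but this is harmless.)
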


\begin{proof}

Let $\mathcal{V}\subset\SU$ be a $C^1$ neighborhood of $X$ such that for every vector field in $\mathcal{V}$ the continuation of $\Lambda$ has the weak specification property robustly. Suppose that $\Lambda$ has a non-hyperbolic singularity $\sigma$  of $X$. So, $D_pX$ admits an eigenvalue $\lambda$ with $Re(\lambda)=0$.


By Lemma \ref{a}, with a small modification of the linear isomorphism $D_\sigma X$, we can find a vector field $Y\in \mathcal{V}$, $C^1$-close to $X$ such that the continuation $\Lambda_Y(U)$ has the weak specification property robustly, and such that $\sigma$ is still a singularity in $\Lambda_Y(U)$, and $\lambda$ is the only eigenvalue of $D_\sigma Y$ with $Re(\lambda)=0$.

Denote by $E^s_\sigma $ the eigenspace of $D_\sigma Y$ associated to the eigenvalues with real part less than zero, by $E^u_\sigma $ the eigenspace of $D_\sigma Y$ associated to the eigenvalues with real part greater than zero, and by $E^c_\sigma $ the eigenspace of $D_\sigma Y$ associated to $\lambda$. Then $$T_\sigma M=E^s_\sigma \oplus E^c_\sigma \oplus E^u_\sigma. $$ Let $\mathcal{V}_Y$ a $C^1$ neighborhood of $Y$ such that every vector field in $\mathcal{V}_Y$ is such that the continuation of $\Lambda_Y(U)$ has the weak specification property.

If $\dim E^c_\sigma=1$:

Then $\lambda=0$ and there exists  $r>0$ such that for all $v\in E^c_\sigma(r)= E^c_{\sigma} \cap T_{\sigma}M(r)$, $Y(\exp_\sigma (v))=0$. Taking $p\in\exp_\sigma (E^c_\sigma(r)) - \{\sigma\}$,   we have that $p$ is a non-hyperbolic singularity for $Y$ and taking $p$ sufficiently close to $\sigma$, we can assume that  $\textrm{index}(\sigma) = \textrm{index}(p) =s$.

Taking $0<\varepsilon_0<d(\sigma,p)/2$ in Lemma \ref{a}, we can take $0<\delta<\delta_0$ and define a linear map $H:T_\sigma M\rightarrow T_\sigma M$ such that $$ H(v)=-\delta v,\forall v\in E^c_\sigma,\mbox{ and }H(v)=D_\sigma Y(v), \forall v\in E^s_\sigma \oplus E^u_\sigma.$$ By Lemma \ref{a} there is $Z\in \mathcal{V}_Y$ such that the continuation $\Lambda_Z(U)$ has the weak specification property and

$$Z(x)=(D_{\exp^{-1}_\sigma (x)}\exp_\sigma )\circ H\circ \exp_\sigma ^{-1}(x),\;\mbox{if}\;x\in B_{\varepsilon_0/4}(\sigma ).$$

Then we have that $\sigma$ is a hyperbolic singularity for $Z$ and $\textrm{index}(\sigma)=s+1$. As $Z(x)=Y(x)$ for all $x\not\in B_{\varepsilon_0}(\sigma)$, we have that $p$ is a non-hyperbolic singularity for $Z$ in $\Lambda_Z(U)$  with $\textrm{index}(p)=s$.

Now we will use Lemma \ref{a} again. But this time we define a linear map $\overline{H}:T_pM\rightarrow T_pM$ in order to find a vector field $\overline{Z}$ close to $Z$ such that, for some $\overline{\varepsilon}_0$,
$$\overline{Z}(x)=(D_{\exp^{-1}_p (x)}\exp_p )\circ \overline{H}\circ \exp_p ^{-1}(x),\;\mbox{if}\;x\in B_{\overline{\varepsilon}_0/4}(p ),$$  $\overline{Z}(x) = Z(x)$ if $x \not\in  B_{\overline{\varepsilon}_0}(p )$, the continuation $\Lambda_{\overline{Z}}(U)$ has the weak specification property and $p$ is a hyperbolic singularity for $\overline{Z}$ with $\textrm{index}(p)=s$. Thus we find a vector field $\overline{Z} \in \SU$ with two hyperbolic singularities in $\Lambda_{\overline{Z}}(U)$ and this contradicts theorem \ref{t.dicotomia} applied to $Z$.

If $\dim E^c_\sigma=2$:

In this case, there are no singularities of $Y$ besides $\sigma$ in the neighborhood of $\sigma$. In fact, for any $p \in \exp_{\sigma}(E_{\sigma}^c(r)) - \{\sigma\}$ $r > 0$, the $Y_t$-orbit $O_Y(p)$ of $p$ is a periodic orbit and $O_Y(p) \subset \exp_{\sigma}(E^c_{\sigma}(r))$.

If $d(\sigma ,p) = s$, let $s_0 < \min\{r-s , s\}$. Then there exists $0 < r_0 \leq s_0$ such that $f: \Pi_{p,r_0} \to \Pi_{p,s_0}$ is a Poincar\'e map. Since $E^c_{\sigma}(r)$ contains the flow direction and it is 2-dimensional, we have that $E^c_{\sigma} \cap \Pi_{p,r_0}$ is a 1-dimensional set and all $x \in E^c_{\sigma} \cap \Pi_{p,r_0}$ is a fixed point of $f$. This implies that $p$ is not a hyperbolic fixed point for $f$.

Taking $p$ closer to $\sigma$ if necessary, we can assume that $\textrm{index}(\sigma) = \textrm{index}(O(p)) = i$ and $p \in \Lambda_Y(U)$.

Using  similar arguments  to the previous case and to the proof of Theorem \ref{l.nonhyp} we can find $Z \in \SV_Y$ such that the continuation $\Lambda_Z(U)$ has the weak specification property robustly, $O(p)$ is a hyperbolic periodic $Z_t$-orbit, $\sigma$ is a hyperbolic singularity for $Z$ but this contradicts theorem \ref{t.dicotomia} applied to $Z$.

If $Crit(X|_{\Lambda})$ is not a unique singularity, then theorem \ref{t.dicotomia} says that $X$ has no singularities and it is sufficient to apply corollary \ref{c.periodichyp}

\end{proof}

\section{Proofs for The Robust Case}

We will begin given the proof of theorem \ref{t.weakhyplocal}. Let $\Lambda$ be a compact and $X$-invariant subset which has the weak specification property robustly. Using lemma \ref{l.transitive}, there are no sources nor sinks. Moreover, using  theorem \ref{sing.nonhyp}, we obtain that critical orbit is hyperbolic.

Now, we give the notion of sectional hyperbolic sets, which is weaker than hyperbolicity, see \cite{MM} for more details.

\begin{definition}
Given a vector field $X$ we say that an invariant compact set $\Gamma$ is \emph{sectional-hyperbolic} if every singularity in $\Gamma$ is hyperbolic and there exists a continuous invariant splitting $T_{\Gamma}M=E\oplus F$ over $\Gamma$ and constants $C > 0$ and $\lambda > 0$ such that for every $x\in \Gamma$ and $t\geq0$:
\begin{itemize}
\item[(i)] The splitting is not trivial: $E_x\neq0$ and $F_x\neq0$.
\item[(ii)] The splitting is dominated: $||DX_t\mid E_x||.||DX_{-t}\mid F_{X_t(x)}|| < Ce^{-\lambda t} .$
\item[(iii)] The subbundle $E$ is contracting: $||DX_t(x)v||\leq Ce^{-\lambda t}\|v\|$ , for every $v\in E_x-\{0\}$.
\item[(iv)] The subbundle $F$ is sectionally expanding: For every 2-plane section $L\subset F$, if we denote $L_x\subset F_x$ the 2-plane in the
subspace $F_x$ then $$|\det(DX_t(x)\mid_{L_x})|>Ce^{\lambda t}.$$
\end{itemize}
\end{definition}

Now, we recall the notions from \cite{Gan-Wen-Zhu}.

\begin{definition}
An invariant set $\Gamma$ is strongly homogeneous of index $i\in[0,d-1]$, where $d$ is the dimension of $M$ if there exist neighborhoods $\SU$ of $X$ and $U$ of $\Gamma$ such that for every $Y\in \SU$ and any periodic orbit of $Y$ in $U$ has index $i$.
\end{definition}

Then we invoke a theorem due to Gan, Wen and Zhu \cite{Gan-Wen-Zhu} and of Metzger and Morales (see theorem A of \cite{MM}).

\begin{theorem}
\label{t.homo}
Let $\Gamma$ be a robustly transitive set of $X$ which is strongly homogeneous of index $i$. If all singularities in $\Gamma$ are hyperbolic then all singularities in $\Gamma$ must have the same index and $\Gamma$ is sectional hyperbolic.
\end{theorem}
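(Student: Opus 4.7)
Since this theorem is cited from the works of Gan--Wen--Zhu and Metzger--Morales, my plan is to sketch the strategy those papers follow, which combines three ingredients: a dominated splitting on a suitable invariant bundle obtained from robust transitivity and strong homogeneity, upgrading it to hyperbolic contraction on one side using closing--lemma arguments, and upgrading it to sectional expansion on the other side using area estimates along approximating periodic orbits. The cleanest object to work with is not $TM|_\Gamma$ directly but the linear Poincar\'e flow on the normal bundle $N = \langle X\rangle^{\perp}$ defined over $\Gamma \setminus \mathrm{Sing}(X)$, because at singularities the flow direction degenerates and an honest $DX_t$--invariant splitting can fail to be continuous.

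First I would show that the linear Poincar\'e flow over $\Gamma\setminus\mathrm{Sing}(X)$ admits a dominated splitting $N = N^{cs} \oplus N^{cu}$ with $\dim N^{cs} = i$. The standard tool here is a Ma\~n\'e/Liao--style lemma: if no such dominated splitting existed, a small $C^1$ perturbation inside the isolating block would produce a periodic orbit whose derivative has a non--trivial Jordan block or two eigenvalues of different moduli that can be mixed, and after a second perturbation one gets a periodic orbit of index different from $i$; this contradicts strong homogeneity. Second, I would promote $N^{cs}$ to a uniformly contracting subbundle. This uses Ma\~n\'e's ergodic closing lemma applied to an invariant measure supported on $\Gamma$: any ergodic measure is approximated in the weak--$*$ sense by atomic measures supported on periodic orbits of nearby systems, and these orbits have index $i$ by strong homogeneity, which forces the Lyapunov exponents along $N^{cs}$ to be negative and uniform. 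A symmetric area--expansion argument (reversing time) shows $N^{cu}$ is sectionally expanding: if some $2$--plane in $N^{cu}$ failed to be area--expanding, a Franks--type perturbation would generate a periodic orbit with a weak hyperbolic eigenvalue and eventually a change of index.

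Next I would handle the singularities. Since each $\sigma \in \mathrm{Sing}(X)\cap\Gamma$ is by hypothesis hyperbolic, the invariant splitting $T_\sigma M = E^s_\sigma \oplus E^u_\sigma$ exists trivially; what needs proof is that (i) it fits together continuously with the splitting obtained on regular orbits, and (ii) all singularities have the same \emph{singular index} (with respect to the dominated splitting) as the periodic orbits, namely $i$. For (ii), robust transitivity provides regular points of $\Gamma$ accumulating on $\sigma$ both in forward and backward time, and via a connecting--lemma perturbation one approximates $\sigma$ by periodic orbits whose index equals $i$; continuity of the dominated splitting under the rescaled linear Poincar\'e flow then forces the stable dimension of $\sigma$ to take a prescribed value depending only on $i$ and the dimension of the flow direction inside $E^u_\sigma$ (or $E^s_\sigma$). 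This is where one sees that two singularities in $\Gamma$ must share the same index. For (i), one uses a rescaling of the linear Poincar\'e flow (the Liao scaled flow) to obtain continuous limits of $N^{cs}, N^{cu}$ at $\sigma$ that match $E^s_\sigma$ and $\langle X\rangle \oplus E^u_\sigma$ (or the reversed configuration), yielding the desired global splitting $E \oplus F$.

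The main obstacle in carrying this out is precisely the singular/regular interface: the linear Poincar\'e flow is defined only on regular points and its norm blows up as one approaches $\sigma$, so passing from a dominated splitting on the normal bundle to a genuine $DX_t$--invariant dominated splitting that extends continuously to the singularities is the delicate step. All the rest---uniform contraction of $E$ and sectional expansion of $F$---follows by rather standard perturbation arguments once the splitting is in place, because strong homogeneity forbids the appearance of periodic orbits of wrong index under any small perturbation. I would therefore spend most of the proof on this rescaling step, which is exactly the technical core of \cite{Gan-Wen-Zhu} and of Theorem A of \cite{MM}, and I would simply invoke their conclusion rather than redo it.
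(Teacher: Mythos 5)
The paper does not prove this statement; it simply imports it as a known theorem of Gan--Wen--Zhu and of Metzger--Morales (Theorem A of \cite{MM}), which is exactly what you do in your final paragraph, and your intermediate sketch of their strategy (dominated splitting for the linear Poincar\'e flow via index rigidity, ergodic closing lemma for contraction, sectional expansion, and Liao's rescaling at the singularities) is a faithful account of the cited proofs. So your proposal matches the paper's treatment.
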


This implies that $\Lambda$ is sectional hyperbolic. Now using theorem \ref{t.dicotomia}, either $\Lambda$ has only hyperbolic periodic orbits and no singularities or it has only one singularity and no periodic orbits. In the former case this implies that $\Lambda$ is a hyperbolic set, using the hyperbolic lemma (see \cite{AP} or \cite{BM}).

\begin{lemma}[Hyperbolic lemma]
\label{l.hyp}
Any sectional hyperbolic set without singularities is hyperbolic.
\end{lemma}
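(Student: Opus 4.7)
The plan is to upgrade the sectional hyperbolic splitting $T_\Lambda M = E \oplus F$ into a genuine hyperbolic splitting $T_\Lambda M = E^s \oplus \langle X \rangle \oplus E^u$ by locating the flow direction inside $F$ and then peeling off an invariant uniformly expanding complement. Since $\Lambda$ is compact and contains no singularities, the continuous function $\|X\|$ is bounded above and below by positive constants on $\Lambda$. The first thing I would check is that $\langle X(x) \rangle \subset F_x$ for every $x \in \Lambda$. If not, write $X(x) = v_E + v_F$ with $v_E \in E_x \setminus \{0\}$; by invariance of $E$ under $DX_t$, the $E$-component of $X(X_{-t}(x))$ equals $DX_{-t}(v_E)$, and since $E$ is contracted in positive time it is expanded in negative time, so $\|DX_{-t}(v_E)\| \to \infty$. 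This contradicts the uniform bound on $\|X\|$ along the orbit combined with continuity of the projection onto $E$ afforded by the continuous splitting.

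Next I would exploit sectional expansion of $F$ to obtain uniform exponential growth in the direction of $F$ transverse to $\langle X \rangle$. Given $v \in F_x$ with $v \perp X(x)$, the 2-plane $L = \operatorname{span}(X(x), v) \subset F_x$ has area $\|X(x)\| \, \|v\|$. Writing $DX_t(v) = \alpha X(X_t(x)) + w^{\perp}$ with $w^{\perp} \perp X(X_t(x))$, the image parallelogram has area $\|X(X_t(x))\| \, \|w^{\perp}\|$, and the sectional-expansion estimate $|\det(DX_t|_L)| \geq C e^{\lambda t}$ combined with the two-sided bound on $\|X\|$ yields
$$\|w^{\perp}\| \;\geq\; C' e^{\lambda t} \|v\|.$$
Thus every nonzero vector of $F$ transverse to the flow direction has its transverse component expanded uniformly exponentially.

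Finally, I would promote this pointwise transverse expansion into a $DX_t$-invariant subbundle $E^u \subset F$ complementary to $\langle X \rangle$. The orthogonal complement of $\langle X \rangle$ in $F$ is continuous but not invariant, so I would set up a strictly invariant cone field around it inside $F$: domination of $E \oplus F$ together with the previous step forces cones transverse to $\langle X \rangle$ to be mapped into narrower ones, and a standard graph-transform argument (of Hirsch-Pugh-Shub type, as used in \cite{AP,BM}) produces a unique continuous $DX_t$-invariant subbundle $E^u$ with $F = \langle X \rangle \oplus E^u$ on which $DX_t$ expands uniformly. Setting $E^s := E$ then gives the required hyperbolic splitting. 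The main obstacle in this plan is precisely this last step: converting the pointwise transverse expansion produced by sectional hyperbolicity and the bound on $\|X\|$ into a bona fide continuous invariant expanding subbundle, rather than merely an invariant cone family. The prior knowledge that singularities are absent — hence $\langle X \rangle$ is a genuine continuous line subbundle sitting inside $F$ — is what makes the graph transform applicable at all, since it rules out the collapse of the flow direction that would otherwise obstruct the construction.
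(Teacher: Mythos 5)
The paper does not actually prove this lemma: it is quoted as a known result with a pointer to \cite{AP} and \cite{BM}, so there is no in-paper argument to compare against. Your sketch is essentially the standard proof from those references, and it is correct in outline. The three steps are the right ones: compactness plus the absence of singularities give $0<\inf_\Lambda\|X\|\leq\sup_\Lambda\|X\|<\infty$; the projection argument then forces $X(x)\in F_x$ (the $E$-component of $X$ would be expanded without bound under $DX_{-t}$ while $X$ itself stays bounded and the projections associated to the continuous splitting are uniformly bounded on the compact set $\Lambda$); and the area computation on $\operatorname{span}(X(x),v)$ turns sectional expansion into uniform exponential growth of the component of $DX_t v$ orthogonal to the flow, with constant $C'=C\inf\|X\|/\sup\|X\|$.

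Two points in your final step deserve tightening. First, the strict invariance of the cone field around $\langle X\rangle^{\perp}\cap F$ is not a consequence of the domination of $E\oplus F$; it comes from comparing the uniform expansion in $F/\langle X\rangle$ obtained in your second step with the rates along $\langle X\rangle$ itself, which are pinched between $\inf\|X\|/\sup\|X\|$ and $\sup\|X\|/\inf\|X\|$ precisely because there are no singularities. Concretely, writing $DX_t|_{F_x}$ in block-triangular form with respect to $\langle X(x)\rangle\oplus(F_x\cap\langle X(x)\rangle^{\perp})$, the induced map on bounded sections of $\operatorname{Hom}(F\cap\langle X\rangle^{\perp},\langle X\rangle)$ is a contraction for $t$ large, and its fixed point is the continuous invariant bundle $E^u$; this is the invariant-section theorem you allude to, correctly applied. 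Second, once $E^u$ is produced you still need uniform expansion of $\|DX_t v\|$ for $v\in E^u$, not merely of its transverse component; this follows because $E^u$ is the graph of a uniformly bounded section, so its angle with $\langle X\rangle$ is bounded away from zero on $\Lambda$. (The degenerate case $\dim F=1$, where condition (iv) is vacuous, gives $F=\langle X\rangle$ and $E^u=\{0\}$ directly.) An equivalent, equally standard route is to note that your first two steps say exactly that the linear Poincar\'e flow over $\Lambda$ is hyperbolic, and then invoke Doering's criterion that hyperbolicity of the linear Poincar\'e flow over a compact invariant set without singularities implies hyperbolicity of the set.
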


In the latter case, we will use an standard application of Hayashi's connecting lemma \cite{H}. Actually, we will need the following version of the $C^1$ connecting lemma which can be found in \cite{Gan-Wen}:




\begin{theorem}\label{tunado}
Let $X \in \Mundo$, and $z \in M$ be neither singular nor periodic of $X$. Then for any $C^1$-neighborhood $\SU$ of $X$ in $\Mundo$, there exist $\rho > 1$, $T > 1$ and $\delta_0 > 0$ such that for any $0 < \delta \leq \delta_0$ and any two points $x, y$ outside the tube $\Delta = \bigcup_{t\in[0,T]}B(X_t(z), \delta)$, if the positive $X$-orbit of $x$
and the negative $X$-orbit of $y$ both hit $B(z, \delta /\rho)$, then there exists $Y \in \SU$ with $Y = X$ outside $\Delta$ such that $y$ is on the positive $Y$-orbit of $x$. Moreover, the resulted $Y$-orbit segment from $x$ to $y$ meets $B(z, \delta)$.
\end{theorem}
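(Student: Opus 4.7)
The plan is to prove this via the tubular-flow-box perturbation technique that underlies Hayashi's original connecting lemma, in the quantitative form refined by Wen and Xia. Since $z$ is neither singular nor periodic, for $T$ chosen so that the arc $X_{[0,T]}(z)$ is injective and bounded away from $\sing(X)\cup \per(X)$, the segment admits a tubular neighborhood on which the flow is $C^1$-conjugate to a translation on a product $D\times[0,T]$, with $D$ a small transverse disc. The tube $\Delta=\bigcup_{t\in[0,T]}B(X_t(z),\delta)$ then carries a canonical family of transverse cross-sections $\Sigma_t$ centered at $X_t(z)$ on which the holonomies along $X$ are well-controlled $C^1$-diffeomorphisms.

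The key steps I would carry out in order are the following. First, fix $\rho>1$ large (to be chosen), and partition $\Delta$ into a finite chain of flow boxes $B_1,\dots,B_N$ of width comparable to $\delta/\rho$ along the flow direction. Second, in each flow box, prepare a Pugh--Robinson style perturbation template: a bump-function supported in $B_j$ that adds to $X$ a vector field tangent to the cross-section, producing a prescribed small translation on $\Sigma_{t_j}$ and leaving the flow unchanged outside $B_j$; the $C^1$-cost of a translation of size $s$ is of order $s/(\delta/\rho)\sim \rho s/\delta$. Third, use the hypothesis: let $p$ be the first point where the forward $X$-orbit of $x$ enters $B(z,\delta/\rho)$, and $q$ the last point where the backward $X$-orbit of $y$ lies in $B(z,\delta/\rho)$; both $p$ and $q$ belong to the innermost core of the tube. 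Using the flow-box coordinates I would identify the transverse discrepancy between the forward image of $p$ and $q$; it is at most the diameter $2\delta/\rho$ of the core ball. Fourth, realise this discrepancy by a single localised bump perturbation in a suitable $B_j$, yielding a vector field $Y$ with $\|Y-X\|_{C^1}\lesssim 1/\rho$, which lies in $\SU$ provided $\rho$ was chosen large enough depending only on $\SU$.

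The main obstacle is the classical one in this circle of ideas: guaranteeing that a single $C^1$-small perturbation localised in $\Delta$ suffices, despite the fact that the forward $X$-orbit of $x$ (and the backward orbit of $y$) may re-enter the tube many times before arriving at the core ball. This is the content of Hayashi's selecting/linking lemma, and the device is to perturb only inside a flow box that is visited at most once by the resulting concatenated $Y$-orbit segment from $x$ to $y$; the existence of such a box is what forces the factor $\rho$ and the finite covering by $N=N(\rho)$ boxes. Once this selection is made, the rest is routine: the perturbed orbit from $x$ exits $\Delta$ only when it reaches $y$, and by construction it crosses $B(z,\delta/\rho)\subset B(z,\delta)$, giving the final clause of the statement.
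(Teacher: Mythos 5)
The paper does not prove this statement at all: Theorem~\ref{tunado} is quoted verbatim as a known result, with the proof delegated entirely to \cite{Gan-Wen} (which in turn rests on the Wen--Xia $C^1$ connecting lemma \cite{WX} and ultimately on Hayashi \cite{H}). So there is no ``paper's proof'' to compare against; the only question is whether your sketch would stand on its own as a proof. It would not.

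Your steps one through three are routine flow-box geometry and are fine. The entire mathematical content of the theorem sits in the obstacle you name at the end --- the forward orbit of $x$ and the backward orbit of $y$ may re-enter the tube $\Delta$ arbitrarily many times before reaching the core ball $B(z,\delta/\rho)$, so a naive surgery destroys the very orbit segments one is trying to concatenate --- and at that point you write that ``this is the content of Hayashi's selecting/linking lemma'' and move on. That is an appeal to the literature, not a proof; if one is allowed to invoke that lemma as a black box, one may as well invoke the connecting lemma itself, which is exactly what the paper does. Moreover, your step four misdescribes what the selection buys: after the linked pair of orbit strings is chosen, the standard argument does not realise the transverse discrepancy by \emph{a single} localised bump in one box $B_j$, but by a chain of small perturbations, one in each of the $N(\rho)$ cross-sections spanning the tube, pushing the orbit step by step from one string to the other; the uniform constants $\rho$ and $T$ come out of the bookkeeping of that chain together with the inductive selection over the number of returns. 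Since none of that is carried out, the proposal should be regarded as a (correct) roadmap of \cite{H} and \cite{WX} rather than a proof; the honest course here is to do what the paper does and cite the result.
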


Let us suppose that $\Lambda$ does not reduces to the singularity $\sigma$. Let $\SU$ be a $C^1$-neighborhood of $X$ such that for all $Y \in \SU$, $\Lambda_Y(U)$ has the weak specification robustly and there exists $\sigma_Y$, the continuation of $\sigma$. By lemma \ref{contraexemplo} we can take $x\in (\Lambda-\{\sigma\})$ such that, either $x \notin W^s(\sigma)$ or $x \notin W^u(\sigma)$. We will deal with the first case, the second is analogous.

Since $\Lambda$ is compact, we have that $\omega(x) \neq \emptyset$ and since $x \notin W^s(\sigma)$ there exists $z \in (\omega(x) - \{\sigma\})$. Then $z$ is not a singularity nor a periodic point by theorem \ref{t.dicotomia}.

Since $z \in \omega(x)$, there exist a sequence $(t_n)_{n\in\N}$ such that $\lim_{t_n \to \infty}X_{t_n}(x) = z$. Let $0 < \delta \leq \delta_0$ such that $X_{\overline{t}}(z) \notin B(z, \delta)$ for some $\overline{t} > 0$. By the continuity of the flow we can find $m \in \N$ bigger enough such that there is $l \in (t_m,t_{m+1})$ satisfying $X_{t_m}(x) \in B(z, \delta)$, $X_{l}(x) \notin B(z, \delta)$ and $X_{t_{m+1}}(x) \in B(z, \delta)$. So we can choose $l_1< l_2$ with $l_1, l_2 \in (t_m,t_{m+1})$ such that $X_{l_1}(x)$ and $X_{l_2}(x)$  are not in $\bigcup_{t\in[0,T]}B(X_t(z), \delta) = \Delta$. Denote by $w$ and $y$, $X_{l_1}(x)$ and $X_{l_2}(x)$ respectively and note that $y$ is on the positive $X$-orbit of $w$.

By theorem \ref{tunado} there exists $Y \in \SU$ with $Y = X$ outside $\Delta$ such that $w$ is on the positive $Y$-orbit of $y$. If we take $\Delta$ such that $\Delta \subset U$, we have that the periodic orbit $O_Y(w)$ is on $\Lambda_Y(U)$. But then $\Lambda_Y(U)$ has the singularity $\sigma_Y$ and a periodic orbit, contradicting theorem \ref{t.dicotomia}.

Thus $\Lambda=\{\sigma\}$ and this finishes the proof of theorem \ref{t.weakhyplocal}.




\section{Proofs on the Generic Case}

In this section we prove theorem \ref{maintheo continuous}.

We recall the notion of star flows. Let $\mathcal{G}^1(M)$ be the set of $C^1$ vector fields in $M$ for which there is a neighborhood $\mathcal{U}$ in the $C^1$ topology such that every critical orbit of every vector field in $\mathcal{U}$ is hyperbolic. Following the literature, we say that the generated flow by $X\in \SG^1(M)$ is an star flow. Just for simplicity, we will say that $X$ is an star flow.

First, we observe that a generic vector field with the weak specification property is an star flow. Suppose, by contradiction, that $X\not\in\SG^1(M)$. Then there exists a sequence $X_n\stackrel{C^1}{\longrightarrow}X$ such that every $X_n$ admits a non-hyperbolic singularity or periodic orbit. This non-hyperbolic singularity or periodic orbit has a $\frac\delta2$-whe, by definition. Then, Lemma \ref{whe.cont} implies that $X$ has a $\delta$-whe, which is a contradiction with Proposition \ref{no.whe.cont}.


Now we proceed as follows. First, by lemma \ref{l.transitive} we have that $M = \Omega(X)$ and then by Pugh's General Density theorem \cite{Pugh} we obtain a dense set of periodic orbits. In particular, since we can also assume that $X$ is Kupka-Smale, by theorem \ref{t.dicotomia}, we have that $X$ has no singularities.

To conclude the proof we invoke Gan-Wen's theorem.

\begin{theorem}[Gan-Wen]
\label{Gan-Wen}
Let $M$ be a closed manifold. If $X$ belongs to $\mathcal G^1(M)$ and has no singularities then $X$ is Axiom A.
\end{theorem}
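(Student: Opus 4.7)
To prove Gan--Wen's theorem, the strategy is to transfer the star property into a uniform hyperbolic structure on the non-wandering set via the linear Poincar\'e flow, which is available precisely because there are no singularities. The first step is to upgrade the star condition into a \emph{uniform} hyperbolicity bound for periodic orbits: using a flow version of Franks' lemma (closely analogous to Lemma~\ref{b}), one shows that there exist a $C^1$-neighborhood $\SU$ of $X$, an integer $m\ge 1$, and a constant $0<\eta<1$ such that for every $Y\in\SU$ and every periodic point $p$ of $Y$ of sufficiently long period, the product of the moduli of the contracting characteristic multipliers of $p$ over every time-$m$ window is at most $\eta^m$, and symmetrically for the expanding ones. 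Otherwise, a Franks-type perturbation of $X$ would produce a non-hyperbolic periodic orbit of some vector field arbitrarily close to $X$, contradicting $X\in\SG^1(M)$. In parallel, because $X$ has no singularities, $\inf_{x\in M}\|X(x)\|>0$, so the normal bundle $N_x=X(x)^\perp$ is continuous on all of $M$ and the linear Poincar\'e flow $\psi_t:N\to N$ (orthogonal projection of $DX_t$ along the flow direction) is well-defined and uniformly continuous; the goal becomes to exhibit a hyperbolic splitting $N=N^s\oplus N^u$ for $\psi_t$ over the non-wandering set $\Omega(X)$.

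The core of the argument is to produce first a \emph{dominated} splitting for $\psi_t$ over $\overline{\mathrm{Per}(X)}$. For this I would invoke Liao's selecting and shadowing lemmas: the uniform hyperbolicity bound from the first step, together with the uniform lower bound on $\|X\|$, is precisely the input required for Liao's machinery to produce a dominated decomposition with uniform angle and domination constants on the closure of the periodic set. Upgrading this dominated splitting to a \emph{uniformly hyperbolic} one is, in my view, the main technical obstacle. The plan is to argue by contradiction: if $N^s$ were not uniformly contracted, Liao's shadowing lemma (whose use here depends essentially on the no-singularity hypothesis, since the flow has bounded return times to Poincar\'e sections) combined with Hayashi's $C^1$-connecting lemma in the form of Theorem~\ref{tunado} would realize the failure as a genuine periodic orbit of a vector field $C^1$-close to $X$ whose contracting multipliers are not uniformly bounded away from~$1$, contradicting the uniform estimate of the first step. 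The symmetric argument applies to $N^u$. This is exactly the step that breaks down in the presence of singularities, leaving only sectional hyperbolicity, so the hypothesis is doing genuine work here.

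Finally, to pass from $\overline{\mathrm{Per}(X)}$ to $\Omega(X)$, I would use that the uniform hyperbolicity of the linear Poincar\'e flow on periodic orbits yields a uniform shadowing property, from which it follows that every non-wandering orbit is accumulated by periodic orbits of $X$ itself, so one does not need to pass to a residual subset via Pugh's general density theorem. The continuity of the splitting in the uniform Liao constants then extends $N^s\oplus N^u$ to a hyperbolic splitting of $\psi_t$ over $\Omega(X)$, and adjoining the flow direction produces the desired hyperbolic splitting $T_{\Omega(X)}M=E^s\oplus\langle X\rangle\oplus E^u$. Combined with the density of hyperbolic periodic orbits in $\Omega(X)$, this gives that $\Omega(X)$ is hyperbolic and that periodic orbits are dense in it, which is the Axiom~A condition.
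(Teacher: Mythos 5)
This theorem is not proved in the paper at all: it is imported verbatim from Gan and Wen's Inventiones paper \cite{Gan-Wen} and used as a black box, so there is no internal proof to compare your argument against. What you have written is a roadmap of the actual (roughly forty-page) proof from the literature, and as a roadmap it is broadly faithful: the star condition is converted, via a Franks-type perturbation of the linear Poincar\'e flow (in the spirit of Lemma \ref{b}), into uniform $(\eta,T)$-contraction and expansion estimates along periodic orbits; the absence of singularities gives $\inf_{x\in M}\|X(x)\|>0$, so the normal bundle and the linear Poincar\'e flow $\psi_t$ are globally defined with uniformly bounded return times to Poincar\'e sections; and Liao's selecting and shadowing lemmas are indeed the engine that converts the uniform periodic estimates into a hyperbolic splitting of $\psi_t$ over $\Omega(X)$.

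However, at the level of detail given, every genuinely hard step is asserted rather than carried out, and two of your signposts point the wrong way. First, you attribute the step ``realize the failure of uniform contraction as a periodic orbit with a weak multiplier'' to Liao's shadowing lemma \emph{combined with} the connecting lemma (Theorem \ref{tunado}); but the connecting lemma gives no control on the derivative along the orbit it creates, so it cannot produce the quantitative contradiction with the uniform multiplier bound. The correct tool is Liao's shadowing lemma alone, applied to quasi-hyperbolic orbit arcs supplied by Liao's selecting lemma: it closes such arcs into periodic orbits of $X$ \emph{itself} with controlled multipliers, and verifying the hypotheses of the selecting lemma (via a minimal non-hyperbolic set argument on the dominated splitting) is the heart of Gan--Wen's proof, which your sketch compresses into a single sentence. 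Second, your final step --- deducing $\Omega(X)=\overline{Per(X)}$ from ``uniform shadowing'' --- is circular as stated: the shadowing you invoke needs uniform estimates along orbit arcs in $\Omega(X)$, not merely in $\overline{Per(X)}$, so one must first extend the dominated splitting to $\Omega(X)$ by taking limits and only then close up nonwandering orbit segments. None of this makes your outline incorrect as a description of the known argument, but it is an outline, not a proof; for the purposes of this paper the statement is correctly handled by the citation to \cite{Gan-Wen}.
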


Thus $X$ is Axiom A. But, since the periodic orbits are dense we obtain that $X$ is Anosov.






\section{Further Remarks}




All the tools used in this paper are available in the context of incompressible flows for manifolds with dimension greater than three. We say that a vector field $X$ in a Riemannian closed manifold $M$ generates an incompressible flow if $div(X)=0$. This implies that for each $t$, the diffeomorphism $X_t$ preserves the Lebesgue measure generated by the Riemannian metric.

We can endow the space of incompressible flows with  the $C^1$ topology and this naturally becomes a Baire space. Also, Kupka-Smale's theorem is available (since the dimension is greater than 3). Franks' lemma and linearizations are also available, see \cite{BDP} and \cite{ArbietoMatheus} as well as Gan-Wen's result (the former follows with the same proof and the linearization given by the pasting lemma, see also \cite{BessaRocha}, and the latter follows from \cite{ArbietoCatalan} and also \cite{Ferreira}). Also, Pugh's general density theorem holds in this setting, see \cite{PR}, as well the connecting lemma, see \cite{WX}. Thus, many of the results obtained here are also valid in this context we the appropriated modifications. For example,

\begin{theorem}\label{globaldiv}
If $X$ is an incompressible vector field which has the weak specification property robustly then $X$ is a topologically mixing Anosov flow.
\end{theorem}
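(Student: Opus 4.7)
The plan is to transcribe the proof of Corollary \ref{global} (equivalently, Theorem \ref{t.weakhyplocal} applied to $\Lambda = M$) into the incompressible setting, replacing each perturbation tool by the divergence-free counterpart cited at the start of this section. Lemma \ref{l.transitive} applies verbatim and yields that $X_t$ is topologically mixing on $M$; since the weak specification property is assumed robust within the incompressible class, topological mixing, and hence transitivity, also hold robustly there.

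Next I would prove the incompressible analogs of Corollary \ref{c.periodichyp} and Theorem \ref{sing.nonhyp}: every critical orbit of $X$, and of every sufficiently close incompressible vector field, is hyperbolic. The arguments of Sections 4 and 6 go through once Lemma \ref{a} and Lemma \ref{b} are replaced by their conservative counterparts, which follow from the Franks-type lemma and the pasting lemma for divergence-free fields in $\dim M > 3$ via \cite{BDP,ArbietoMatheus,BessaRocha}. With the star property in hand, the proof of Theorem \ref{t.dicotomia} then goes through, using that the Kupka-Smale theorem is available for incompressible vector fields in dimension at least three.

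Applied to $\Lambda = M$, the incompressible version of Theorem \ref{sing.nonhyp} then forces either $M = \{\sigma\}$, which is impossible since $\dim M > 3$, or $X$ has no singularities with all periodic orbits hyperbolic. In the second alternative, robust transitivity combined with the divergence-free version of the Gan-Wen-Zhu and Metzger-Morales results underlying Theorem \ref{t.homo}, available via \cite{ArbietoCatalan,Ferreira}, yields that $M$ is sectional-hyperbolic. The Hyperbolic Lemma \ref{l.hyp} then upgrades this to hyperbolicity of $M$, so $X$ is Anosov, and topological mixing was already established in the first step.

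The main obstacle is verifying that every perturbation step, namely the Franks-type lemma, the pasting lemma, the Kupka-Smale theorem, and, if needed in the reduction ruling out singularities, Hayashi's connecting lemma from \cite{WX}, admits a version producing perturbations that remain in the incompressible class; the cited references cover each one, with the caveat that $\dim M > 3$ is essential for the pasting construction and for the Kupka-Smale theorem to apply in this setting. A secondary technical point is guaranteeing the existence of a critical orbit so that the dichotomy can be invoked: volume preservation provides abundant recurrence, and combining this with the incompressible closing lemma of \cite{PR,WX} and the star property produces a hyperbolic periodic orbit, either of $X$ itself or of an arbitrarily small divergence-free perturbation still lying inside the robust neighborhood, which is sufficient to trigger the dichotomy and complete the argument.
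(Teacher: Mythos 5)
Your proposal is correct and follows essentially the same route as the paper, which proves Theorem \ref{globaldiv} only by remarking that every tool used in the robust case (Franks-type lemma, pasting lemma/linearization, Kupka--Smale, the connecting lemma, and the Gan--Wen / Metzger--Morales machinery) admits a divergence-free counterpart in dimension greater than three; your write-up is in fact more explicit than the paper's. The only slight imprecision is that the analog of Theorem \ref{sing.nonhyp} by itself yields a unique \emph{hyperbolic} singularity rather than $M=\{\sigma\}$, and it is the connecting-lemma argument of Section 7 --- which you do flag as needed --- that completes this reduction.
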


\ack{The authors want to thank Prof. C. Morales for fruitful conversations.}

\end{document}